\theoremstyle{plain}
\newtheorem{satz}{Theorem}[section]
\newtheorem{lem}[satz]{Lemma}
\newtheorem{kor}[satz]{Corollary}
\theoremstyle{definition}
\newtheorem{defn}[satz]{Definition}
\newtheorem{bem}[satz]{Remark}
\newcommand{\R}{\mathbb{R}}
\newcommand{\C}{\mathbb{C}}
\newcommand{\N}{\mathbb{N}}
\newcommand{\Z}{\mathbb{Z}}
\renewcommand{\O}{\mathbb{O}}
\newcommand{\Id}{\operatorname{Id}}
\newcommand{\Ric}{\operatorname{Ric}}
\newcommand{\scal}{\operatorname{scal}}
\newcommand{\vol}{\operatorname{vol}}
\newcommand{\Sym}{\operatorname{Sym}}
\newcommand{\tr}{\operatorname{tr}}
\newcommand{\diag}{\operatorname{diag}}
\newcommand{\End}{\operatorname{End}}
\newcommand{\Aut}{\operatorname{Aut}}
\newcommand{\Ad}{\operatorname{Ad}}
\newcommand{\ad}{\operatorname{ad}}
\newcommand{\Cas}{\operatorname{Cas}}
\newcommand{\Sy}{\mathscr{S}}
\newcommand{\SU}{\operatorname{SU}}
\newcommand{\su}{\mathfrak{su}}
\newcommand{\SO}{\operatorname{SO}}
\newcommand{\so}{\mathfrak{so}}
\newcommand{\Spin}{\operatorname{Spin}}
\renewcommand{\Re}{\operatorname{Re}}
\renewcommand{\Im}{\operatorname{Im}}
\newcommand{\pr}{\operatorname{pr}}
\renewcommand{\H}{\mathcal{H}}
\newcommand{\m}{\mathfrak{m}}
\newcommand{\h}{\mathfrak{h}}
\newcommand{\g}{\mathfrak{g}}
\renewcommand{\k}{\mathfrak{k}}
\newcommand{\TT}{\Sy^2_{\mathrm{tt}}}
\newcommand{\e}{\mathfrak{e}}
\newcommand{\f}{\mathfrak{f}}
\renewcommand{\t}{\mathfrak{t}}
\newcommand{\V}{\mathfrak{v}}
\newcommand{\Weyl}{\mathfrak{W}}
\newcommand{\cartan}{\boxdot}
\newcommand{\A}{\mathcal{A}}
\newcommand{\LC}{\nabla}
\newcommand{\CR}{\bar\nabla}
\newcommand{\Riem}{R}
\newcommand{\Rcr}{\bar R}
\newcommand{\Tcr}{\bar T}
\newcommand{\PSO}{\operatorname{PSO}}
\newcommand{\Ricr}{\overline{\operatorname{Ric}}}
\newcommand{\LiE}{LiE}
\newcommand{\SI}{\mathcal{S}}
\newcommand{\E}{\mathrm{E}}
\newcommand{\Der}{\operatorname{Der}}
\title{\rmfamily Stability of the Non--Symmetric Space $\E_7/\PSO(8)$}
\author{Paul Schwahn\footnote{Corresponding author. Institut f\"ur Geometrie und Topologie, Fachbereich Mathematik, Universit\"at Stuttgart, Pfaffenwaldring 57, 70569 Stuttgart, Germany. E-mail: paul.schwahn@mathematik.uni-stuttgart.de}, Uwe Semmelmann\footnote{Institut f\"ur Geometrie und Topologie, Fachbereich Mathematik, Universit\"at Stuttgart, Pfaffenwaldring 57, 70569 Stuttgart, Germany. E-mail: uwe.semmelmann@mathematik.uni-stuttgart.de}, Gregor Weingart\footnote{Instituto de Matem\'aticas, Universidad Nacional Aut\'onoma de M\'exico, Avenida Universidad s/n, Colonia Lomas de Chamilpa, 62210 Cuernavaca, Mexico. E-mail: gw@im.unam.mx}}
\date{\today}
\begin{document}

\maketitle

\begin{abstract}
\noindent
We prove that the normal metric on the homogeneous space $\E_7/\PSO(8)$ is stable with respect to the Einstein-Hilbert action, thereby 
exhibiting the first known example of a non-symmetric metric of positive scalar curvature with this property.

\medskip

\noindent{\textit{Mathematics Subject Classification} (2020): 53C24, 53C25, 53C30}

\medskip

\noindent{\textit{Keywords:} Homogeneous spaces, Einstein metrics, Stability}
\end{abstract}

\subsection*{Declarations}

\paragraph*{Funding.} The first and second author acknowledge the support received by the Special Priority Program SPP 2026 \emph{Geometry at Infinity} funded by the Deutsche Forschungsgemeinschaft DFG. Likewise the third author expresses his gratitude for the funding received as a SNI member from the Consejo Nacional de Ciencia y Tecnolog\'{i}­a CONACyT.

\paragraph*{Declarations of interest.} None.

\pagebreak

\section{Introduction}
\label{sec:intro}

Einstein metrics are Riemannian or pseudo-Riemannian metrics whose Ricci tensor is proportional to the metric, i.e.~$\Ric_g=Eg$ for some constant $E$ called the Einstein constant of $g$.  It is a well-known fact that Einstein metrics on closed manifolds are precisely the critical points of the Einstein--Hilbert functional $\SI(g):=\int_M\scal_g\vol_g$ restricted to the space of metrics of unit volume. Einstein metrics are always saddle points but they can be local maxima if the functional is further restricted to the set of unit volume metrics with constant scalar curvature. Tangent to this is the space of tt-tensors, i.e.~traceless and divergence-free symmetric $2$-tensors. The second variation of the Einstein-Hilbert functional $\SI$ on tt-tensors can be expressed in terms of the Lichnerowicz Laplacian $\Delta_L$ on symmetric $2$-tensors as
\[\SI_g''(h,h)=-\big(\Delta_Lh-2Eh,h\big)_{L^2}.\]
Following Koiso \cite{Koiso80} we will call an Einstein metric $g$ \emph{stable} if $g$ is a local maximum of the Einstein-Hilbert functional $\SI$ restricted to the space of tt-tensors. In particular this is the case if $\SI''_g<0$ on tt-tensors, or equivalently if $\Delta_L>2E$. If $g$ is a saddle point instead, the metric $g$ is called \emph{unstable}. The existence of a tt-tensor $h$ such that $\SI''_g(h,h)>0$, or equivalently, $\Delta_Lh=\mu h$ for some eigenvalue $\mu<2E$, implies instability of the metric. These eigentensors for eigenvalues less than $2E$ are also called \emph{destabilizing directions}. Unstable Einstein metrics are particularly
 interesting since they turn out to also be unstable with respect to Perelman's $\nu$-entropy as well as dynamically unstable with respect to the Ricci flow (see \cite{CH15},\cite{K15}). Finally, metrics $g$ with $\SI''_g\leq0$ on tt-tensors, or equivalently $\Delta_L\ge2E$, will be called \emph{linearly stable}.

In \cite{Koiso80} Koiso studied the stability question for symmetric spaces. It turned out that most of the irreducible symmetric spaces of compact type are linearly stable and only very few are unstable (see also \cite{SW22} and \cite{S22} for the proof in the cases not covered by Koiso). Further examples of stable Einstein metrics are provided by Einstein metrics of negative sectional curvature (see \cite[Cor.~12.73]{B87}), or by Kähler--Einstein metrics of negative scalar curvature (see \cite{DWW07}). All known compact manifolds of vanishing Ricci curvature, in other words all manifolds admitting parallel spinors, are linearly stable (see \cite{DWW05}). On the other side there are many examples of unstable Einstein metrics, e.~g.~metrics on the total space of a Riemannian submersion over an unstable base (see \cite{B05},\cite{WW21}). Sometimes destabilizing directions are related to harmonic forms, as on Kähler--Einstein manifolds of positive scalar curvature with $b_2>0$ (see \cite{CHI04}), nearly Kähler manifolds in dimension $6$ with $b_2>0$ or $b_3>0$ (see \cite{SWW20}), or on Einstein--Sasaki manifolds with $b_2>0$ (see \cite{SWW22}). Recently, many more unstable examples on homogeneous spaces appeared in the work of J.~Lauret et al. (see \cite{L1},\cite{L2},\cite{L3}). It is interesting to note that all these unstable examples have positive scalar curvature. Indeed it is rather surprising that so far, except on the symmetric spaces, no example of a stable Einstein metric of positive scalar curvature was found.

In this article we will consider the generalized Wallach space $\E_7/\PSO(8)$ and its universal cover. The standard metric on this non-symmetric homogeneous space induced by minus the Killing form is known to be Einstein of positive scalar curvature. Moreover it was shown in \cite{L2} that the standard metric is $G$-stable in the sense that it is a local maximum of the Einstein--Hilbert functional $\SI$ restricted to the space of $G$-invariant metrics. The main result of our article is the stability of the standard metric on $\E_7/\PSO(8)$ in the much larger class of all Riemannian metrics. This provides the first example of a stable non-symmetric Einstein metric of positive scalar curvature.
 
\begin{satz}[Lower Estimate for the Lichnerowicz Laplacian]
\hfill\label{lest}\break
Let $g$ be the standard Riemannian metric of positive Einstein constant $E=\frac{\scal}{105}=\frac{13}{36}$ on the connected homogeneous space $M=\E_7/\PSO(8)$ or its universal cover. Then the Lichnerowicz Laplacian $\Delta_L$ restricted to the space of tt-tensors satisfies
\[\Delta_L\geq\frac{30}{13}E>2E.\]
Equality is realized exactly on left invariant, trace free symmetric $2$-tensors. In particular, the Riemannian metric $g$ is a stable, non-symmetric Einstein metric of positive scalar curvature.
\end{satz}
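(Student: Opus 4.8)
The plan is to convert the spectral estimate for $\Delta_L$ into a finite computation in the representation theory of $G=\E_7$ and $K=\PSO(8)$, in the spirit of the treatment of symmetric and normal homogeneous spaces. By the Peter--Weyl theorem and Frobenius reciprocity the $L^2$-sections of $\Sym^2T^*M$ decompose $G$-equivariantly as $\closedsum_{\rho\in\hat G}V_\rho\otimes\Hom_K(V_\rho,\Sym^2\m^*)$, where $\m\cong\g/\k$ carries the isotropy representation; for the generalized Wallach structure one has $\m=\m_1\oplus\m_2\oplus\m_3$ with each $\m_i$ an irreducible $35$-dimensional $\PSO(8)$-module, cyclically permuted by triality, and with $[\m_i,\m_i]\subseteq\k$, $[\m_i,\m_j]\subseteq\m_k$. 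Since $\Delta_L$ is $G$-invariant and self-adjoint it preserves each isotypic summand and acts there through a symmetric endomorphism $\Delta_L^\rho$ of the finite-dimensional multiplicity space $\Hom_K(V_\rho,\Sym^2\m^*)$; inside it the tt-tensors form the $\Delta_L^\rho$-invariant subspace cut out as the common kernel of the finite-dimensional linear maps induced by the trace and the divergence. It suffices to establish the estimate and the equality characterization on the universal cover, since pullback along the finite covering injects the $L^2$ tt-tensors of $M$ into those of the cover and intertwines the Lichnerowicz Laplacians, the corresponding statements for $M$ following by pullback; non-symmetry of $M$ is immediate from $[\m_i,\m_j]\subseteq\m_k\ne0$.

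The next step is to use the Weitzenböck formula $\Delta_L=\nabla^*\nabla+q(R)$ together with the fact that $g$ is the normal metric. For the canonical connection $\CR$ one has $\CR^*\CR=\Cas^G-\Cas^K$ on each isotypic component, while the Levi--Civita connection differs from $\CR$ by the naturally reductive term $X\mapsto\tfrac12[X,\,\cdot\,]_\m$; expanding $\nabla^*\nabla$ and using that $\CR$-parallel (i.e.\ $G$-invariant) tensors drop out of the cross terms, one obtains on the $\rho$-isotypic summand a formula of the shape
\[\Delta_L^\rho=\Cas^G(\rho)\cdot\Id+\mathcal B_\rho+\mathcal Q,\]
where $\mathcal Q$ is a fixed zero-order $K$-equivariant endomorphism of $\Sym^2\m^*$ --- assembled from $\Cas^K$, the curvature $q(\Rcr)$ of the canonical connection and the torsion, hence computable from the brackets and the Casimir constants of the $\m_i$ --- and $\mathcal B_\rho$ is the first-order cross term $\sum_i\big(\tfrac12\ad(e_i)|_\m\big)\otimes\rho(e_i)$ over an orthonormal basis $(e_i)$ of $\m$, whose operator norm is bounded by $c_1\sqrt{\Cas^G(\rho)}$ with $c_1$ depending only on $(\g,\k)$. (For a symmetric space $\mathcal B_\rho$ vanishes and $\mathcal Q$ vanishes on tt-tensors, recovering Koiso's identity $\Delta_L=\Cas^G$.) In particular $\Delta_L^\rho\ge\Cas^G(\rho)-c_1\sqrt{\Cas^G(\rho)}-\lVert\mathcal Q\rVert$ for every $\rho$.

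Because the nonzero Casimir eigenvalues of $\E_7$ (normalized by $-B$, so that the adjoint representation has eigenvalue $1$) are bounded below by an explicit positive constant and tend to infinity, the previous lower bound already forces $\Delta_L^\rho\ge\frac{30}{13}E=\frac56$ for every $\rho$ whose Casimir exceeds an explicit threshold. It remains to treat the finitely many ``small'' irreducible representations $\rho$ with $\Hom_K(V_\rho,\Sym^2\m^*)\ne0$ below that threshold: for each of them one computes the finite matrix $\Delta_L^\rho$ from the formula above, using the branching rule $\E_7\!\downarrow\!\PSO(8)$ and the $K$-decomposition of $\Sym^2\m^*$ into the summands $\Sym^2\m_i$ and $\m_i\otimes\m_j$ (a finite computation carried out with \LiE), restricts to the tt-subspace, and verifies $\Delta_L^\rho\ge\frac56$ there. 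The trivial representation $\rho=\mathbf 1$ corresponds to the left-invariant symmetric $2$-tensors $(\Sym^2\m^*)^K$, which is $3$-dimensional and spanned by the metrics of $\m_1,\m_2,\m_3$; these are automatically divergence-free since $K$ fixes no vector in $\m$, and on its $2$-dimensional trace-free part $\Delta_L^{\mathbf 1}=\mathcal Q$ comes out equal to $\frac56$. This yields the sharp constant and the equality characterization, and since $\frac56>\frac{13}{18}=2E$ it proves, via the criterion recalled in the introduction, that $g$ is a stable Einstein metric.

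The hardest part is that crude norm bounds on $\mathcal Q$ and $\mathcal B_\rho$ suffice to eliminate all representations of large Casimir, but the sharp constant $\tfrac{30}{13}$ leaves no slack, so the genuine work is twofold: first, determining the \emph{exact} eigenvalues of the zero-order operator $\mathcal Q$ on every $K$-irreducible piece of $\Sym^2\m^*$ --- which requires careful bookkeeping of the maps $\m_i\otimes\m_j\to\m_k$, of the Casimir constants of the three $35$-dimensional $\PSO(8)$-modules and of the curvature of the canonical connection --- and second, the exhaustive but finite branching analysis, in which one must be sure that every $\E_7$-representation of small Casimir sharing a $K$-constituent with the $5565$-dimensional module $\Sym^2\m^*$ has been found, and that $\Delta_L^\rho$, together with its tt-subspace, has been evaluated correctly on each of them, with $\rho=\mathbf 1$ turning out to be the unique extremal case.
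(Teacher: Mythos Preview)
Your plan is coherent but takes a substantially harder route than the paper, and the step you flag as the ``genuine work'' --- the exhaustive branching over small $\E_7$-types with explicit evaluation of $\Delta_L^\rho$ on each --- is never needed and is not carried out in your proposal.

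The decisive simplification you miss is the \emph{pointwise} operator inequality
\[
\Delta_L\;\ge\;2\,q(R)
\]
on divergence-free symmetric $2$-tensors, which comes from the Weitzenb\"ock identity $\Delta_L-2q(R)=\delta\delta^\ast-\delta^\ast\delta$ (not from $\Delta_L=\nabla^\ast\nabla+q(R)$, which by itself gives no sign). With this inequality the whole problem collapses to computing the smallest eigenvalue of the \emph{algebraic} endomorphism $q(R)$ on the single fibre $\Sym^2_0\m$: no Peter--Weyl decomposition, no enumeration of $\E_7$-representations, no first-order cross term $\mathcal B_\rho$ at all. The paper then writes $q(R)=q(\bar R)+\tfrac14\A^\ast\A=\Cas^{\so(8)}+\tfrac14\A^\ast\A$ and evaluates $\A^\ast\A$ on each $\so(8)$-isotypical piece of $\Sym^2\m$ via the chain $\so(8)\subset\su(8)_a\subset\e_7$ and triality, obtaining $q(R)\ge\tfrac{5}{12}$ on $\Sym^2_0\m$ with equality exactly on the $2$-dimensional invariant subspace $(\Sym^2_0\m)^{\so(8)}$. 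This gives $\Delta_L\ge\tfrac56=\tfrac{30}{13}E$ at once; the equality case follows because equality in $\Delta_L\ge 2q(R)$ forces $h$ to be a Killing tensor and $q(R)h=\tfrac{5}{12}h$ forces $h$ to take values in the invariant subbundle, and the $G$-invariant trace-free tensors are Killing.

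Your expansion is in fact equivalent to $\Delta_L=\Cas^{\e_7}+\mathcal B+\tfrac12\A^\ast\A$, and on the trivial type this correctly reproduces $\Delta_L^{\mathbf 1}=\tfrac12\A^\ast\A=\tfrac56$ on $(\Sym^2_0\m)^{\so(8)}$. But for every nontrivial $\rho$ your crude estimate $\Cas^{\e_7}(\rho)-c_1\sqrt{\Cas^{\e_7}(\rho)}-\lVert\mathcal Q\rVert$ will not clear $\tfrac56$ for the low-lying $\E_7$-types (the smallest nonzero Casimir values are below $1$), so you would be forced to compute the mixed term $\mathcal B_\rho=\sum_i(\A_{e_i})_\ast\otimes\rho_\ast(e_i)$ \emph{explicitly} on each such $\rho$, together with the tt-projection. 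That is a finite but substantially larger computation than the paper's fibrewise one, and your proposal only asserts it can be done; the paper's approach sidesteps it entirely.
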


The proof of the main theorem rests on an estimate of $\Delta_L$ against a curvature term $q(R)$. The strategy of the article is as follows. Section~\ref{sec:prelim} sets up the necessary preliminaries about the Lichnerowicz Laplacian, normal homogeneous spaces and Casimir operators and introduces along the way the auxiliary operator $\A^\ast\A$ that depends on the torsion of the reductive connection on a homogeneous space. In Section~\ref{sec:curv}, the curvature endomorphism $q(R)$ is related to $\A^\ast\A$ and a formula for the latter is given in terms of Casimir operators. The Lie algebra $\e_7$ as well as the homogeneous space $\E_7/\PSO(8)$ and its relevant structure are discussed in Section~\ref{sec:e7pso8}. Finally, in Section~\ref{sec:computation} we compute the eigenvalues of $\A^\ast\A$ and thus $q(R)$, yielding a sufficient lower bound on $\Delta_L$ to prove Theorem~\ref{lest}.

\section{Preliminaries}
\label{sec:prelim}

\subsection{The Lichnerowicz Laplacian}
\label{sec:prelimll}

Let $(M,g)$ be a Riemannian manifold with Levi-Civita connection denoted by $\LC$. The Riemannian curvature tensor and Ricci tensor are given by
\begin{align*}
\Riem(X,Y)Z&:=\LC_X\LC_YZ-\LC_Y\LC_XZ-\LC_{[X,Y]}Z,\\
\Ric(X,Y)&:=\tr(Z\mapsto\Riem(Z,X)Y).
\end{align*}
We use the term \emph{tensor bundle} to refer to a vector bundle $VM$ that is associated to the frame bundle of $(M,g)$ by some representation of $\SO(n)$. Equivalently, a tensor bundle is a $\SO(TM)$-invariant subbundle of some tensor power of $TM$.

On any tensor bundle $VM$ the \emph{standard curvature endomorphism} is the symmetric endomorphism $q(\Riem)\in\End(VM)$ defined by
\[q(\Riem):=\sum_{i<j}(e_i\wedge e_j)_\ast R(e_i,e_j)_\ast,\]
where $(e_i)$ is a local orthonormal frame of $TM$. The asterisk denotes the natural action of $\so(T)$ on tensors, i.e. extension as a derivation. We also implicitly identify $\Lambda^2T\cong\so(T)$ via
\[X\wedge Y\longmapsto(Z\mapsto g(X,Z)Y-g(Y,Z)X).\]
On $TM$ the endomorphism  $q(R)$ coincides with the Ricci endomorphism, i.e.
\[g(q(R)X,Y)=\Ric(X,Y).\]
Applied to the bundle $\Sym^2T^\ast M$ of symmetric $2$-tensors, $q(R)$ can be written as
\[q(R)=-2\mathring{\Riem}-\Der_{\Ric},\]
where $\mathring{\Riem}$ is the so-called \emph{curvature operator of second kind} given by
\[(\mathring{\Riem}h)(X,Y)=\sum_ih(R(e_i,X)Y,e_i),\qquad h\in\Sym^2T^\ast M,\]
while $\Der_A$ denotes the extension of some endomorphism $A\in\End(T)$ to higher-rank tensors as a derivation.

The \emph{Lichnerowicz Laplacian} $\Delta_L$ is now defined on tensor fields, i.e.~smooth sections of $VM$, by
\[\Delta_L:=\LC^\ast\LC+q(\Riem).\]
This is a Laplace type operator with a discrete spectrum accumulating only at positive infinity. On differential forms $\Delta_L$ coincides with the Hodge Laplacian $\Delta=d^\ast d+dd^\ast$, thus generalizing the latter to tensors of arbitrary algebraic type. Even more generally, the Lichnerowicz Laplacian is an instance of the \emph{standard Laplace operator} on geometric vector bundles introduced in \cite{SW19}. 

Since we aim to investigate the spectrum of $\Delta_L$ on tt-tensors, the bundle we will primarily consider is $\Sym^2T^\ast M$. We will denote by
\[\Sy^p(M):=\Gamma(\Sym^pT^\ast M),\quad p\in\N,\]
the space of smooth sections of $\Sym^pT^\ast M$.

The \emph{divergence operator} on symmetric tensors is defined as the metric contraction of the covariant derivative, i.e.
\[\delta:\ \Sy^{p+1}(M)\to\Sy^p(M):\ h\mapsto\delta h:=-\sum_ie_i\lrcorner\nabla_{e_i}h\]
for a local orthonormal frame $(e_i)$ of $TM$.

As explained in the introduction, the stability of an Einstein metric $g$ is decided by a spectral property of the Lichnerowicz operator $\Delta_L$ on the space $\TT(M)$ of tt-tensors, i.e.~on symmetric $2$-tensors $h$ with $\tr_gh=0$ and $\delta h=0$. On this space we have the lower bound 
\begin{equation}
\Delta_L\geq2q(R)
\label{eq:weitzenboeck}
\end{equation}
(see \cite[Prop.~6.2]{HMS16}), which will be the main tool for our proof of the stability of the standard metric on $\E_7/\PSO(8)$. The estimate is consequence of the Weitzenböck formula
\[\Delta_L-2q(R)=\LC^\ast\LC-q(R)=\delta\delta^\ast-\delta^\ast\delta,\]
where the symmetrized covariant derivative (or \emph{Killing operator}) $\delta^\ast:\ \Sy^2(M)\to\Sy^3(M)$ is formally adjoint to the divergence $\delta$. Tensors in the kernel of $\delta^\ast$ are called \emph{Killing tensors} (see \cite{HMS16} for further details). We see that a divergence-free tensor $h$ satisfies the equality $\Delta_Lh=2q(R)h$ if and only if it is Killing. In many cases, e.g.~for the Berger space $\SO(5)/\SO(3)_{\mathrm{irr}}$ (see \cite{SWW22}), destabilizing directions for Einstein metrics are realized by Killing tensors.

\subsection{Normal homogeneous spaces}
\label{sec:prelimnh}

Let $M=G/H$ be a homogeneous space and let $\g$ and $\h$ denote the Lie algebras of $G$ and $H$, respectively. Let further $Q$ be an $\Ad(G)$-invariant inner product on $\g$, and let $\m:=\h^{\perp_Q}$ denote the $Q$-orthogonal complement of $\h$ in $\g$, which is canonically identified with the tangent space $T_oM$ at the base point $o=eH$. In particular, we obtain an $\Ad(H)$-invariant decomposition $\g=\h\oplus\m$. We will use subscripts $X_\h,X_\m$ to denote the projection of $X\in\g$ to the respective direct summand. The unique $G$-invariant Riemannian metric $g$ which coincides with the restriction $Q\big|_{\m}$ at the base point is called the \emph{normal} metric induced by $Q$.

For compact and semisimple $G$, the Killing form $B_\g$ is negative-definite -- hence, $-B_\g$ is an $\Ad(G)$-invariant inner product on $\g$. The metric $g$ on $M$ induced by $-B_\g$ will be called the \emph{standard metric}. Naturally, if $G$ is simple, every normal metric will be a scalar multiple of the standard metric.

A normal homogeneous space is in particular naturally reductive, that is, it satisfies
\[g([X,Y]_\m,Z)+g(Y,[X,Z]_\m)=0\quad\text{for all }X,Y,Z\in\m.\]
In other words, the $G$-invariant $(2,1)$-tensor $\A$ defined by $\A_XY:=[X,Y]_\m$ is totally skew-symmetric.

Since $\m\subset\g$ is $\Ad(H)$-invariant, the decomposition is \emph{reductive} -- therefore, it defines a $G$-invariant connection $\CR$ on $M$, called the \emph{canonical, reductive} (or \emph{Ambrose--Singer}) connection, which is induced by the left-invariant principal connection
\[\pr_\h\circ\,\theta:\ TG\longrightarrow\h,\]
where $\theta:\ TG\to\g$ denotes the Maurer-Cartan form and $\pr_\h$ some $H$-equivariant projection from $\g$ to $\h$. It can also be viewed as the affine Ehresmann connection corresponding to the horizontal distribution $\H=\bigcup_{x\in G}dl_x(\m)$ in $TG$. A distinctive property of the reductive connection is that every $G$-invariant tensor is $\CR$-parallel. The $G$-invariant torsion and curvature tensors of $\CR$ are given by
\begin{equation}
\begin{aligned}
\Tcr(X,Y)&=-[X,Y]_\m=-\A_XY,\\
\Rcr(X,Y)Z&=-[[X,Y]_\h,Z]
\end{aligned}
\qquad\text{for }X,Y,Z\in\m.\label{eq:crcurv}
\end{equation}
In particular $\CR$ is a metric connection with parallel and totally skew-symmetric torsion. If we extend the endomorphism $\A_X\in\so(\m)$ to tensors as a derivation $(\A_X)_\ast$, it induces a $\CR$-parallel bundle map
\begin{alignat*}{2}
\A&:\ VM\to T^\ast M\otimes VM:\quad &v&\mapsto\sum_ie_i^\flat\otimes(\A_{e_i})_\ast v\\
\intertext{for any tensor bundle $VM$, with metric adjoint}\\
\A^\ast&:\ T^\ast M\otimes VM\to VM:\quad &\alpha\otimes v&\mapsto\sum_i\alpha(e_i)(\A_{e_i})_\ast v.
\end{alignat*}
This allows us to express the relation between the reductive connection and the Levi-Civita connection of $g$ by
\[\LC=\CR+\frac{1}{2}\A.\]
Recall that if $(M,g)$ is a Riemannian symmetric space, it satisfies the Cartan relation $[\m,\m]\subset\h$, implying $\Tcr=0$ and thus $\LC=\CR$. In this sense, the tensor $\A$ measures the failure of a normal homogeneous space $(M,g)$ to be (locally) symmetric.

It is worth noting that the standard Laplacian $\CR^\ast\CR+q(\Rcr)$ coincides with the action of the Casimir operator (see Section~\ref{sec:prelimcas}) on the left-regular representation of $G$ on sections of tensor bundles over $M$. This fact has been vital for Koiso's study of the stability of compact symmetric spaces \cite{Koiso80}.

We further note that the composition $\A^\ast\A$ is a $\CR$-parallel self-adjoint bundle endomorphism of $VM$ that can, by combining the above, be written as
\[\A^\ast\A=-\sum_i(\A_{e_i})_\ast^2.\]
This auxiliary operator will be employed in order to compute the spectrum of $q(R)$ on the symmetric $2$-tensors of the normal homogeneous space $\E_7/\PSO(8)$, utilizing the formulae in Section~\ref{sec:curv}.

\subsection{Casimir operators}
\label{sec:prelimcas}

The leitmotif of analysis and geometry on normal homogeneous spaces is to reduce calculations as far as possible to the computation of eigenvalues of Casimir operators. Fix some invariant inner product $Q$ on a compact Lie algebra $\g$. Given a representation $(V,\rho_\ast)$ of $\g$, its \emph{Casimir operator} is the endomorphism defined by
\[\Cas^{\g,Q}_V:=-\sum_i\rho_\ast(e_i)^2\in\End(V).\]
This operator is  $\g$-equivariant. By Schur's Lemma it hence acts as multiplication with a constant when applied to an finite-dimensional irreducible complex representation of $\g$. This constant can be computed by means of Freudenthal's formula. Choose a maximal torus $\t\subset\g$ and let $\langle\cdot,\cdot\rangle$ be the inner product on the dual $\t^\ast$ that is induced by $Q\big|_{\t\times\t}$. We label the (equivalence classes of) finite-dimensional irreducible representations $V_\gamma$ of $\g$ by their highest weights $\gamma\in\t^\ast$. If the complex representation $V_\gamma$ has a real structure, we will sometimes abuse notation and denote the real form by $V_\gamma$ as well. The Casimir eigenvalue on $V_\gamma$ is then given by
\begin{equation}
\Cas^{\g,Q}_\gamma:=\langle\gamma,\gamma+2\delta_\g\rangle,
\label{eq:freudenthal}
\end{equation}
where $\delta_\g$ is the half-sum of positive roots of $\g$. We omit the superscript $Q$ if the inner product is clear from context. When working on a normal homogeneous space $M=G/H$ with metric induced by $Q$, we will encounter Casimir operators of both Lie algebras $\g$ and $\h$. Unless otherwise stated, the inner product on $\g$ will be $Q$ and the inner product on $\h$ will be the restriction $Q\big|_{\h\times\h}$.

Suppose $\g$ is a compact Lie algebra, i.e.~$B_\g$ is negative definite, and $\h\subset\g$ is a subalgebra. Fix the standard inner product $-B_\g$ on both $\g$ and $\h$ and consider the adjoint representation $\g$ as a representation of $\h$. An easy calculation then shows that
\begin{equation}
\tr_\g\Cas^{\h,-B_\g}_\g=\dim\h.
\label{eq:castr}
\end{equation}
In particular the Casimir operator of $\g$ on its adjoint representation satisfies the normalization condition
\[\Cas^{\g,-B_\g}_\g=1.\]

On a normal homogeneous space $M=G/H$ the standard curvature endomorphism $q(\Rcr)$ of the reductive connection $\CR$ acts as
\begin{equation}
q(\bar R)=\Cas^\h_V
\label{eq:qrcas}
\end{equation}
on any tensor bundle $VM$. In particular the Ricci endomorphism $\Ricr$ of the reductive connection coincides with $\Cas^\h_\m$. It is well-known that if $g$ is the standard metric, $(M,g)$ is Einstein if and only if $\Cas^\h_\m$ has only one eigenvalue. In this case the Einstein constant $E$ can easily be computed by means of the relation
\begin{equation}
\Cas^\h_\m=2E-\frac{1}{2},
\label{eq:caseinstein}
\end{equation}
cf.~\cite[Prop.~7.89,~7.92]{B87}.

\section{Curvature formulae}
\label{sec:curv}

In order to compute the endomorphism $q(\Riem)$ on a normal homogeneous space, we would like to relate it to the curvature endomorphism $q(\Rcr)$ of the reductive connection, which coincides with the Casimir operator $\Cas^\h$ on the fiber. As mentioned in Section~\ref{sec:prelimnh}, the reductive connection is an instance of a metric connection $\CR$ with parallel skew torsion $\Tcr$. Such a connection can always be recovered from its torsion by means of the formula $\CR=\LC+\frac{1}{2}\Tcr$. Moreover there is a well-known relation (cf.~\cite{CMS})
\begin{equation}
(\Riem-\Rcr)(X,Y)=\frac{1}{4}([\Tcr_X,\Tcr_Y]-2\Tcr_{\Tcr_XY})
\label{eq:curvskew}
\end{equation}
between its curvature tensor $\Rcr$ and the Riemannian curvature $\Riem$, where $\Tcr_X:=\Tcr(X,\cdot)$.

Note that with $\A_XY=[X,Y]_\m$ the torsion of the reductive connection is given by $\Tcr=-\A$. Despite only the case of the reductive connection being necessary for our purposes, we state the following lemma in its full generality.

\begin{lem}
Let $(M,g)$ be a Riemannian manifold with Levi-Civita connection $\LC$ and another metric connection $\CR=\LC+\frac{1}{2}\Tcr$ with parallel skew torsion. On symmetric tensors of any rank,
\[q(\Riem)-q(\Rcr)=-\frac{1}{4}\sum_i(\Tcr_{e_i})_\ast^2.\]
\end{lem}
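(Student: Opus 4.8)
The plan is to start from the known decomposition of the Riemannian curvature relative to the connection with parallel skew torsion, namely equation \eqref{eq:curvskew}, and to contract it against the action $(e_i\wedge e_j)_\ast$ that defines $q(\Riem)$. Writing $q(\Riem)-q(\Rcr)=\sum_{i<j}(e_i\wedge e_j)_\ast\big((\Riem-\Rcr)(e_i,e_j)\big)_\ast$, I would substitute \eqref{eq:curvskew} to obtain
\[
q(\Riem)-q(\Rcr)=\frac{1}{4}\sum_{i<j}(e_i\wedge e_j)_\ast\Big([\Tcr_{e_i},\Tcr_{e_j}]_\ast-2(\Tcr_{\Tcr_{e_i}e_j})_\ast\Big).
\]
The two resulting sums then have to be evaluated separately, and the claim is that their combination collapses to $-\tfrac14\sum_i(\Tcr_{e_i})_\ast^2$ on symmetric tensors.

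For the bookkeeping it is cleanest to identify $\Lambda^2T\cong\so(T)$ as in the excerpt and to think of $\Tcr$ as a map $T\to\so(T)$, $X\mapsto\Tcr_X$, which by skew-symmetry of the torsion is the same datum as an element of $\Lambda^3 T^\ast$. The key algebraic identities I would use are: first, $\sum_i(e_i\wedge e_j)_\ast A_\ast$ type expressions can be rewritten using the fact that for $A\in\so(T)$ one has $A=\sum_{i<j}g(Ae_i,e_j)\,e_i\wedge e_j$, so $\sum_{i<j}g(Ae_i,e_j)\,(e_i\wedge e_j)_\ast = A_\ast$; second, $[\Tcr_X,\Tcr_Y]$ lives in $\so(T)$ and its "wedge coefficients" are $g([\Tcr_X,\Tcr_Y]e_i,e_j)$; third, the cyclic (Bianchi-type) symmetry of $\Tcr$ as a $3$-form lets one convert $\Tcr_{\Tcr_X Y}$ terms. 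Concretely I expect the double commutator term to contribute $\sum_{i<j}(e_i\wedge e_j)_\ast[\Tcr_{e_i},\Tcr_{e_j}]_\ast$, which one rewrites — after expanding $[\Tcr_{e_i},\Tcr_{e_j}]$ in the frame and using the $3$-form symmetry to relabel indices — in terms of $\sum_k(\Tcr_{e_k})_\ast^2$ and possibly a term that vanishes on \emph{symmetric} tensors (this is where symmetry of the tensor rank is used, since $(e_i\wedge e_j)_\ast$ acting as a derivation picks up antisymmetrizations that die against symmetric arguments). Similarly the $\Tcr_{\Tcr_X Y}$ term reduces, via $\sum_{i<j}(e_i\wedge e_j)_\ast(\Tcr_{[e_i,e_j]\text{-part}})_\ast$ and the $3$-form identity $\sum_i \Tcr_{e_i}\otimes\Tcr_{e_i}$ being symmetric, to another multiple of $\sum_k(\Tcr_{e_k})_\ast^2$; assembling the coefficients $\tfrac14(\,\cdot\,-2\,\cdot\,)$ yields the stated $-\tfrac14\sum_i(\Tcr_{e_i})_\ast^2$.

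The main obstacle I anticipate is the careful index manipulation in the double commutator term: one must expand $[\Tcr_{e_i},\Tcr_{e_j}]=\sum_k\big(g(\Tcr_{e_i}e_j,e_k)\,\text{stuff}\big)$ is \emph{not} available directly, so instead one writes $[\Tcr_{e_i},\Tcr_{e_j}]\,e_k=\Tcr_{e_i}\Tcr_{e_j}e_k-\Tcr_{e_j}\Tcr_{e_i}e_k$ and then uses the total skew-symmetry $g(\Tcr_{e_i}e_j,e_k)=g(\Tcr_{e_j}e_k,e_i)$ repeatedly to recombine the quadruple sum $\sum_{i<j}\sum_k g([\Tcr_{e_i},\Tcr_{e_j}]e_k,e_l)(e_i\wedge e_j)_\ast$ into $\sum_{k}(\Tcr_{e_k})_\ast^2$ modulo terms that act trivially on $\Sym^p T^\ast M$. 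Keeping track of which pieces genuinely vanish on symmetric tensors (as opposed to on all tensors) is the delicate point, and I would verify the final coefficient by testing the identity on $VM=TM$ (rank one), where $q(\Riem)=\Ric$, $q(\Rcr)=\Ricr$, and $\Ric-\Ricr$ is the known expression $-\tfrac14\sum_i\Tcr_{e_i}^2$ from the skew-torsion literature — a consistency check that also pins down signs and normalizations.
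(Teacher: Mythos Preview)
Your plan is correct and follows essentially the same route as the paper: start from \eqref{eq:curvskew}, contract against $(e_i\wedge e_j)_\ast$, and use the total skew-symmetry of $\Tcr$ to collapse the result. The paper organizes the computation by first establishing the identity $\sum_i(\Tcr_{e_i})_\ast^2=\sum_{j<k}(e_j\wedge e_k)_\ast(\Tcr_{\Tcr_{e_j}e_k})_\ast$ (valid on \emph{all} tensors, not just symmetric ones) and then showing that the remaining piece $q(S)$, with $S(X,Y):=[\Tcr_X,\Tcr_Y]-\Tcr_{\Tcr_XY}$, vanishes on symmetric tensors by verifying it on $TM$ and treating the cross terms in $\Sym^p$ separately --- precisely the split you anticipate, made explicit.
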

\begin{proof}
Let $(e_i)$ be an orthonormal basis of $T_xM$ and denote $a_{ijk}:=g(\Tcr(e_i,e_j),e_k)$. Note that $a_{ijk}$ is antisymmetric in the indices $i,j,k$. It follows from the definition of the curvature endomorphism and equation~(\ref{eq:curvskew}) that
\begin{align*}
q(\Riem)-q(\Rcr)&=\frac{1}{4}\sum_{j<k}(e_j\wedge e_k)_\ast([\Tcr_{e_j},\Tcr_{e_k}]-2\Tcr_{\Tcr_{e_j}e_k})_\ast.
\end{align*}
Looking at the individual terms,
\begin{align*}
[\Tcr_{e_j},\Tcr_{e_k}]&=\sum_{\substack{i\\l<m}}(a_{kli}a_{jim}-a_{jli}a_{kim})e_l\wedge e_m,\\
\Tcr_{\Tcr_{e_j}e_k}&=\sum_{\substack{i\\l<m}}a_{jki}a_{ilm}e_l\wedge e_m.
\end{align*}
It follows that
\[\sum_i(\Tcr_{e_i})_\ast^2=\sum_{\substack{i\\j<k\\l<m}}a_{ijk}a_{ilm}(e_j\wedge e_k)_\ast(e_l\wedge e_m)_\ast=\sum_{j<k}(e_j\wedge e_k)_\ast(\Tcr_{\Tcr_{e_j}e_k})_\ast.\]
Let $S(X,Y):=[\Tcr_X,\Tcr_Y]-\Tcr_{\Tcr_XY}$. It remains to show that $q(S)=0$ on symmetric tensors. Indeed, 
\begin{align*}
q(S)e_k&=\sum_{i<j}(e_i\wedge e_j)_\ast([\Tcr_{e_i},\Tcr_{e_j}]e_k-\Tcr_{\Tcr_{e_i}e_j}e_k)\\
&=\sum_{i,j}g([\Tcr_{e_i},\Tcr_{e_j}]e_k-\Tcr_{\Tcr_{e_i}e_j}e_k,e_i)e_j\\
&=\sum_{i,j,l}(a_{ili}a_{jkl}-a_{jli}a_{ikl}-a_{ijl}a_{lki})e_j=0
\end{align*}
using the antisymmetry of $a_{ijk}$, so $q(S)$ vanishes on $T_xM$. Let now $p\in\N$ and denote by $\odot$ the associative symmetric product. For $X_1,\ldots,X_p\in T_xM$,
\begin{align*}
q(S)(X_1\odot\ldots\odot X_p)&=\sum_{i<j}(e_i\wedge e_j)_\ast S(e_i\wedge e_j)_\ast(X_1\odot\ldots\odot X_p)\\
&=\sum_{\substack{i<j\\k}}X_1\odot\ldots\odot(e_i\wedge e_j)_\ast S(e_i\wedge e_j)X_k\odot\ldots\odot X_p\\
&\phantom{=}+\sum_{\substack{i<j\\k\neq l}}X_1\odot\ldots\odot(e_i\wedge e_j)_\ast X_k\odot\ldots\odot S(e_i\wedge e_j)_\ast X_l\odot\ldots\odot X_p.
\end{align*}
Summing over $i,j$ in the first sum reduces it to having a factor of the form $q(S)X$ in each summand, which was just shown to vanish. The second sum, on the other hand, can be grouped to contain factors of the type 
\begin{align*}
&\sum_{i<j}((e_i\wedge e_j)_\ast X\odot S(e_i,e_j)Y+S(e_i,e_j)X\odot(e_i\wedge e_j)_\ast Y)\\
=\,&\sum_{i,j}(g(e_i,X)e_j\odot S(e_i,e_j)Y+g(e_i,Y)S(e_i,e_j)X\odot e_j)\\
=\,&\sum_{j}e_j\odot(S(X,e_j)Y+S(Y,e_j)X),
\end{align*}
which vanishes as well since
\begin{align*}
\langle S(e_i,e_j)e_k+S(e_k,e_j)e_i,e_l\rangle=&\sum_m(a_{jkm}a_{iml}-a_{ikm}a_{jml}-a_{ijm}a_{mkl}\\
&+a_{jim}a_{kml}-a_{kim}a_{jml}-a_{kjm}a_{mil})=0.
\end{align*}
Combining the above, we obtain $q(S)=0$ on $\Sym^pT_xM$. The same calculation works for $\Sym^pT_x^\ast M$ up to sign changes in the action of $\so(T_xM)$, which however cancel out in the end. In total, this proves the assertion.
\end{proof}

\begin{kor}\label{qdiff}
If $(M,g)$ is normal homogeneous with reductive connection $\CR$, then
\[q(\Riem)-q(\Rcr)=\frac{1}{4}\A^\ast\A.\]
\end{kor}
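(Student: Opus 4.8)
The plan is to obtain the corollary directly from the preceding lemma by specializing the metric connection with parallel skew torsion to the reductive connection $\CR$ and then bookkeeping signs. First I would verify that $\CR$ meets the hypotheses of the lemma: as recorded in Section~\ref{sec:prelimnh}, on a normal homogeneous space the canonical reductive connection is metric, its torsion $\Tcr=-\A$ (with $\A_XY=[X,Y]_\m$) is parallel and totally skew-symmetric, and it is related to the Levi-Civita connection by $\LC=\CR+\frac{1}{2}\A$, i.e.\ $\CR=\LC+\frac{1}{2}\Tcr$. Hence the lemma applies verbatim with this choice of $\Tcr$, giving on symmetric tensors of any rank
\[q(\Riem)-q(\Rcr)=-\frac{1}{4}\sum_i(\Tcr_{e_i})_\ast^2.\]

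Next I would rewrite the right-hand side in terms of $\A^\ast\A$. Since $\Tcr_{e_i}=\Tcr(e_i,\cdot)=-[e_i,\cdot]_\m=-\A_{e_i}$ and the extension-as-a-derivation map $(\,\cdot\,)_\ast$ is linear, we have $(\Tcr_{e_i})_\ast=-(\A_{e_i})_\ast$; the sign disappears upon squaring, so $(\Tcr_{e_i})_\ast^2=(\A_{e_i})_\ast^2$. Substituting and invoking the identity $\A^\ast\A=-\sum_i(\A_{e_i})_\ast^2$ from Section~\ref{sec:prelimnh} yields
\[q(\Riem)-q(\Rcr)=-\frac{1}{4}\sum_i(\A_{e_i})_\ast^2=\frac{1}{4}\A^\ast\A,\]
which is the assertion.

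As for the main obstacle: there is none of real substance here — this is a one-line consequence of the lemma, whose proof carries all the content (in particular the vanishing $q(S)=0$ on symmetric tensors). The only point worth a moment's care is that the lemma is established precisely on symmetric tensor bundles, which is exactly the class $\Sym^2T^\ast M$ needed for the Lichnerowicz Laplacian on tt-tensors, so no strengthening of the lemma is required to deduce the corollary.
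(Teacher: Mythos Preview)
Your proposal is correct and matches the paper's approach: the corollary is stated there without a separate proof precisely because it is the immediate specialization of the preceding lemma with $\Tcr=-\A$, together with the identity $\A^\ast\A=-\sum_i(\A_{e_i})_\ast^2$. Your sign check and the observation that the lemma's scope (symmetric tensors) is exactly what is needed are accurate.
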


From now on, we stay in the normal homogeneous setting as introduced in Section \ref{sec:prelimnh}, where $\CR$ is the reductive connection and $\A_XY=[X,Y]_\m$. The $H$-equivariant endomorphism $\A^\ast\A$ can itself be written in terms of Casimir operators, yielding an approach to the computation of its spectrum. For $p\in\N$, consider the $p$-fold tensor power $\m^{\otimes p}$ embedded into $\g^{\otimes p}$, and let
\[\pr_{\m^{\otimes p}}:\ \g^{\otimes p}\longrightarrow\m^{\otimes p}\]
be the orthogonal projection onto $\m^{\otimes p}$ with respect to the inner product naturally induced by $Q$ on the tensor power.

\begin{lem}
\label{acas1}
On tensors of rank $p$,
\[\A^\ast\A=\pr_{\m^{\otimes p}}\Cas^\g_{\g^{\otimes p}}\big|_{\m^{\otimes p}}-\Cas^\h_{\m^{\otimes p}}-\Der_{\Cas^\h_\m}.\]
\end{lem}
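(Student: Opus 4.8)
The plan is to compute $\A^\ast\A$ on $\m^{\otimes p}$ directly from the definition $\A^\ast\A=-\sum_i(\A_{e_i})_\ast^2$, where $(e_i)$ is an orthonormal basis of $\m$, and to recognize the result as a combination of Casimir operators. The starting observation is that the action $(\A_X)_\ast$ on $\m^{\otimes p}$ is the derivation extension of $\A_X=\ad(X)_\m=\pr_\m\circ\ad(X)\big|_\m$, whereas the action $\ad(X)_\ast$ of $X\in\h$ (or indeed of $X\in\g$) on $\g^{\otimes p}$ is the derivation extension of $\ad(X)$. So the idea is to express everything through the full adjoint action on $\g^{\otimes p}$ and then project.

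First I would fix an orthonormal basis $(X_a)$ of $\g$ adapted to the decomposition $\g=\h\oplus\m$, so that the first $\dim\h$ vectors span $\h$ and the rest, call them $(e_i)$, span $\m$. On $\g^{\otimes p}$ one has $\Cas^\g_{\g^{\otimes p}}=-\sum_a(\ad X_a)_\ast^2=-\sum_i(\ad e_i)_\ast^2-\sum_\alpha(\ad H_\alpha)_\ast^2$ where $(H_\alpha)$ is an orthonormal basis of $\h$; the second sum is by definition $\Cas^\h_{\g^{\otimes p}}$. Next I would relate $(\ad e_i)_\ast$ on $\g^{\otimes p}$ to $(\A_{e_i})_\ast$ on $\m^{\otimes p}$: for a decomposable tensor $v=v_1\otimes\cdots\otimes v_p\in\m^{\otimes p}$ the vector $(\ad e_i)_\ast v=\sum_k v_1\otimes\cdots\otimes[e_i,v_k]\otimes\cdots\otimes v_p$ has each slot $[e_i,v_k]=[e_i,v_k]_\m+[e_i,v_k]_\h$; the $\m$-part gives exactly $(\A_{e_i})_\ast v$. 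Applying $(\ad e_i)_\ast$ again and then summing over $i$ and projecting onto $\m^{\otimes p}$, the cross terms organize themselves: the purely-$\m$ contributions reassemble into $-\sum_i(\A_{e_i})_\ast^2=\A^\ast\A$, while the contributions that route through an $\h$-component in one slot and come back produce, after using the naturally reductive identity and $\Ad(H)$-invariance, precisely $\Der_{\Cas^\h_\m}$. Concretely, a single-slot term $\sum_i[e_i,[e_i,v_k]_\h]_\m$ acting on the $k$-th factor equals $\Cas^\h_\m v_k$ up to sign, because $\sum_i[e_i,[e_i,\cdot]_\h]$ restricted appropriately is the $\h$-Casimir on $\m$ computed with the $\g$-basis of $\m$; summing the per-slot terms over $k$ yields $\Der_{\Cas^\h_\m}$. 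Rearranging $\pr_{\m^{\otimes p}}\Cas^\g_{\g^{\otimes p}}\big|_{\m^{\otimes p}}=\pr_{\m^{\otimes p}}\big(-\sum_i(\ad e_i)_\ast^2\big)\big|_{\m^{\otimes p}}+\Cas^\h_{\m^{\otimes p}}$ and isolating $\A^\ast\A$ then gives the claimed formula, provided one checks that $\pr_{\m^{\otimes p}}\Cas^\h_{\g^{\otimes p}}\big|_{\m^{\otimes p}}=\Cas^\h_{\m^{\otimes p}}$, which holds since $\m^{\otimes p}$ is $\h$-invariant and the projection is $\h$-equivariant.

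The step I expect to be the main obstacle is the careful bookkeeping of the cross terms in $-\sum_i(\ad e_i)_\ast^2$ on a $p$-fold tensor: expanding $(\ad e_i)_\ast^2 = \big(\sum_k (\ad e_i)^{(k)}\big)^2$ produces same-slot terms $\sum_k (\ad e_i)^{(k)}(\ad e_i)^{(k)}$ and distinct-slot terms $\sum_{k\neq l}(\ad e_i)^{(k)}(\ad e_i)^{(l)}$, and one must track, in each slot, whether the intermediate value lands in $\h$ or $\m$ and then whether the projection $\pr_{\m^{\otimes p}}$ at the end kills it. The distinct-slot cross terms with one $\h$-landing and one $\m$-landing are exactly the pieces that do \emph{not} survive into $\A^\ast\A$ nor into $\Der_{\Cas^\h_\m}$, and showing they cancel is where the naturally reductive (total skew-symmetry) hypothesis $g([X,Y]_\m,Z)=-g(Y,[X,Z]_\m)$ and the $\ad(\h)$-invariance of $Q$ both get used. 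I would handle this by writing out the index computation in an adapted orthonormal basis, in the same spirit as the proof of the preceding lemma, keeping the rank-$1$ case as the model and then lifting to rank $p$ by the derivation property exactly as was done there. Once the $p=1$ identity $\A^\ast\A=\pr_\m\Cas^\g_\g\big|_\m-\Cas^\h_\m-\Cas^\h_\m$ is established (note $\Der_{\Cas^\h_\m}=\Cas^\h_\m$ on rank $1$), the general case follows formally from compatibility of all three operators with the symmetric/tensor-product structure.
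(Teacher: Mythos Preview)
Your approach is the paper's, organized slightly differently: the paper packages the same computation via the grading $\g^{\otimes p}=\bigoplus_{r}\V_r$ (where $\V_r$ collects tensors with exactly $r$ factors in $\m$), observes that $\ad(X)_\ast$ is block-tridiagonal for $X\in\m$, and reads off that the $(\V_p,\V_p)$-block of $\ad(X)_\ast^2$ equals $(\A_X)_\ast^2+\Der_{[X,[X,\cdot]_\h]}$; the remaining Casimir bookkeeping is identical to yours.

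Your anticipated ``main obstacle'' is a phantom. For distinct slots $k\neq l$, the operator $(\ad e_i)^{(k)}(\ad e_i)^{(l)}$ applied to $v\in\m^{\otimes p}$ first replaces $v_l$ by $[e_i,v_l]=[e_i,v_l]_\m+[e_i,v_l]_\h$ and then acts on slot $k$, \emph{leaving slot $l$ untouched}. The final projection $\pr_{\m^{\otimes p}}$ therefore just selects the $\m$-component in slot $l$ and in slot $k$ independently, and what survives is exactly $(\A_{e_i})^{(k)}(\A_{e_i})^{(l)}$. There are no mixed $\h$/$\m$ distinct-slot terms to cancel, and the total skew-symmetry of $\A$ is not used anywhere in this lemma. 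The only nontrivial identification is in the same-slot contribution, namely $\sum_i[e_i,[e_i,\cdot]_\h]=-\Cas^\h_\m$ on $\m$, which follows from $\Ad$-invariance of $Q$ alone. Consequently your proposed fallback (prove $p=1$ and lift ``formally'') is unnecessary---and in any case would still require checking that the cross-slot terms on both sides match, which is precisely the observation above.
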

\begin{proof}
Let $X\in\m$. Since $[\m,\h]\subset\m$, the operator $\ad(X)\in\so(\g)$ can be written as a block matrix
\[\ad(X)=\begin{pmatrix}0&r_X'\\r_X&\A_X\end{pmatrix}\]
with respect to the decomposition $\g=\h\oplus\m$, where
\[r_X=\ad(X)\big|_\h\quad\text{and}\quad r_X'=-(r_X)^\ast=\pr_\h\ad(X)\big|_\m.\]
Consider now the $p$-fold tensor power
\begin{equation}
\g^{\otimes p}=(\h\oplus\m)^{\otimes p}=\bigoplus_{r=0}^p\V_r\label{ptensordecomp}
\end{equation}
where $\V_r\cong\binom{p}{r}\h^{\otimes p-r}\otimes\m^{\otimes r}$. In particular $\V_p=\m^{\otimes p}$. Note that the induced endomorphism $\ad(X)_\ast\in\so(\g^{\otimes p})$ is a derivation, changing only one factor in the tensor product at once. Hence
\[\ad(X)_\ast:\ \V_r\to\V_{r-1}\oplus\V_r\oplus\V_{r+1}\]
(we set $\V_{-1}=\V_{p+1}=0$). In other words, it takes the block form
\[\ad(X)_\ast=\begin{pmatrix}
                  0&\ast&0&\ldots&0\\
                  \ast&\ast&\ddots&\ddots&\vdots\\
                  0&\ddots&\ddots&\ddots&0\\
                  \vdots&\ddots&\ddots&\ast&\ast\\
                  0&\ldots&0&\ast&(\A_X)_\ast
                 \end{pmatrix}
\]
with respect to decomposition (\ref{ptensordecomp}). The nonzero entries of the last row and column are given by
\begin{align*}
a_{p-1,p}&=\pr_{\V_{p-1}}\ad(X)_\ast\big|_{\m^{\otimes p}}=(r_X')_\ast,\\
a_{p,p-1}&=\pr_{\m^{\otimes p}}\ad(X)_\ast\big|_{\V_{p-1}}=r_X\otimes\Id_{\m^{\otimes(p-1)}},\\
a_{p,p}&=\pr_{\m^{\otimes p}}\ad(X)_\ast\big|_{\m^{\otimes p}}=(\A_X)_\ast.
\end{align*}
Combining these, the lowest rightmost entry of $\ad(X)_\ast^2$ is
\[\pr_{\m^{\otimes p}}\ad(X)_\ast^2\big|_{\m^{\otimes p}}=(r_X\otimes\Id_{\m^{\otimes(p-1)}})\circ(r_X')_\ast+(\A_X)_\ast^2.\]
For $X_1,\ldots,X_p\in\m^{\otimes p}$, we have
\begin{align*}
&(r_X\otimes\Id_{\m^{\otimes(p-1)}})\circ(r_X')_\ast(X_1\otimes\ldots\otimes X_p)\\
=\,&(r_X\otimes\Id_{\m^{\otimes(p-1)}})([X,X_1]_\h\otimes X_2\otimes\ldots\otimes X_p+\ldots+X_1\otimes\ldots\otimes X_{p-1}\otimes[X,X_p]_\h)\\
=\,&[X,[X,X_1]_\h]\otimes X_2\otimes\ldots\otimes X_p+\ldots+X_1\otimes\ldots\otimes X_{p-1}\otimes[X,[X,X_p]_\h]\\
=\,&\Der_{[X,[X,\cdot]_\h]}(X_1\otimes\ldots\otimes X_p).
\end{align*}
Together with (\ref{eq:crcurv}) and (\ref{eq:qrcas}) this implies
\begin{align*}
\sum_i(r_{e_i}\otimes\Id_{\m^{\otimes(p-1)}})\circ(r_{e_i}')_\ast&=\sum_i\Der_{[e_i,[e_i,\cdot]_\h]}=-\Der_{\Ricr}=-\Der_{\Cas^\h_\m},
\end{align*}
where $(e_i)$ is an orthonormal basis of $\m$. Note that $(e_i)$ extends any orthormal basis of $\h$ to an orthonormal basis of $\g$. Thus by definition
\[\Cas^\g_{\g^{\otimes p}}=-\sum_i\ad(e_i)_\ast^2-\Cas^\h_{\g^{\otimes p}}.\]
By virtue of $\m^{\otimes p}\subset\g^{\otimes p}$ being an $H$-invariant subspace,
\[\Cas^\h_{\g^{\otimes p}}\big|_{\m^{\otimes p}}=\Cas^\h_{\m^{\otimes p}}.\]
Putting everything together, we obtain
\begin{align*}
\A^\ast\A&=-\sum_i(\A_{e_i})_\ast^2=-\sum_i(\pr_{\m^{\otimes p}}\ad(e_i)_\ast^2\big|_{\m^{\otimes p}}-(r_{e_i}\otimes\Id_{\m^{\otimes(p-1)}})\circ(r_{e_i}')_\ast)\\
&=\pr_{\m^{\otimes p}}\Cas^\g_{\g^{\otimes p}}\big|_{\m^{\otimes p}}-\Cas^\h_{\m^{\otimes p}}-\Der_{\Cas^\h_\m}.
\end{align*}
\end{proof}

\clearpage
\section{The normal homogeneous space $\E_7/\PSO(8)$}
\label{sec:e7pso8}

We begin with a construction of the exceptional Lie algebra $\e_7$ that has the advantage of introducing the chain of subalgebras $\so(8)\subset\su(8)\subset\e_7$ along the way, which will be important later on. If $\Sym^2_0\R^8$ denotes the space of trace-free symmetric $8\times8$-matrices over $\R$, then
\[\su(8)\longrightarrow\so(8)\oplus\Sym^2_0\R^8:\ X\mapsto(\Re X,\Im X)\]
is a vector space isomorphism. According to the classification of symmetric spaces, there exists a symmetric pair $\su(8)\subset\e_7$ whose complex isotropy representation is equal to $\Lambda^4\C^8$. In other words, there exists an $\SU(8)$-invariant real structure on $\Lambda^4\C^8$, i.e. a real $\SU(8)$-module $W$ such that $\Lambda^4\C^8=W^\C$, and $\e_7=\su(8)\oplus W$.

Upon restriction to $\so(8)\subset\su(8)$, the isotropy representation $W\cong\Lambda^4_+\R^8\oplus\Lambda^4_-\R^8$ decomposes into the self-dual and anti-self-dual forms, which in turn are equivalent to the trace-free second symmetric powers $\Sym^2_0\Sigma^\pm$ of the two half-spin representations $\Sigma^\pm$ (both isomorphic, but not equivalent to the defining representation $\R^8$).

Summarizing this argument, we can construct the exceptional Lie algebra $\e_7$ as a Lie algebra with underlying vector space
\[\e_7:=\so(8)\oplus\m:=\so(8)\oplus(\m_0\oplus\m_1\oplus\m_2),\qquad\m_a:=\Sym^2_0\R^8_a,\quad a=0,1,2,\]
where $\R^8_0,\R^8_1,\R^8_2$ denote the three inequivalent representations of $\so(8)$ on $\R^8$. Due to triality in dimension eight, it is actually immaterial which of the three representations $\R^8_0$, $\R^8_1$ and $\R^8_2$ we identify with the defining representation. Indeed there exists an outer automorphism $\Theta\in\Aut(\so(8))$ of order $3$, which cyclically permutes the $\R^8_a$ and extends to an automorphism of $\e_7$ by cyclically permuting the summands $\m_a$, $a=0,1,2$.

Throughout this and the next chapter we will encounter several different representations of $\so(8)$, $\su(8)$ and $\e_7$ and decompose some of their tensor products. As in Section~\ref{sec:prelimcas} we will label irreducible finite-dimensional complex representations $V_\gamma$ of some Lie algebra by their highest weights $\gamma$. It is therefore appropriate to introduce a basis of fundamental weights for each of the three relevant Lie algebras. Here we follow the convention of Bourbaki \cite[Planches~I, IV, VI]{bourbaki}, using the same sets of fundamental weights in the same order. The fundamental weights are denoted as follows:
\begin{align*}
\omega_1,\ldots,\omega_7&\quad\text{ for }\e_7&&\text{ (type $\E_7$)}&\text{with adjoint representation }V_{\omega_1}&=\e_7,\\
\zeta_1,\ldots,\zeta_7&\quad\text{ for }\su(8)&&\text{ (type $\mathrm{A}_7$)}&\text{with standard representation }V_{\zeta_1}&=\C^8,\\
\eta_1,\ldots,\eta_4&\quad\text{ for }\so(8)&&\text{ (type $\mathrm{D}_4$)}&\text{with standard representation }V_{\eta_1}&=\R^8.
\end{align*}
Under this convention we can write the $\so(8)$-modules $\m_a$ as
\begin{equation}
\m_0=\Sym^2_0\R^8=V_{2\eta_1},\quad\m_1=\Sym^2_0\Sigma^+=V_{2\eta_3},\quad\m_2=\Sym^2_0\Sigma^-=V_{2\eta_4}.
\label{eq:isotropyweights}
\end{equation}
It will become important that precomposing a $\so(8)$-representation with the triality automorphism $\Theta$ cyclically permutes the weights $\eta_1,\eta_3,\eta_4$. The tensor product decompositions in Section~\ref{sec:computation} are computed with the help of the software package \LiE, which uses the same enumerative convention.

In passing we remark that similar to the construction of the exceptional Lie algebras $\g_2$ and $\f_4$, the real division algebra of octonions plays a crucial role in the construction of the Lie algebra $\e_7$. It provides both the automorphism $\Theta$ and the remaining parts of the Lie brackets that are not covered by the action of $\so(8)$.\footnote{Choosing an isometry $\O\cong\R^8$ or, equivalently, an orthonormal basis $(e_1,\ldots,e_8)\subset\O$ with respect to $\langle X,Y\rangle:=\Re(\bar XY)$, we may in fact define a bilinear convolution product $\bullet$ on the vector space $\R^{8\times8}$ by setting
\[A\bullet B:=\sum_{i,j=1}^8A_{ij}L_iBL^\top_j,\]
where $L_i\in\R^{8\times8}$ are the matrices representing the endomorphisms $x\mapsto\overline{e_ix}$. In terms of this convolution product, the triality automorphism on $\so(8)$ reads $\Theta(X):=\frac{1}{4}\Id_{8\times8}\bullet X$, while $[X,Y]:=\frac{1}{2}X\bullet Y$ defines the partial Lie bracket $\Sym^2_0\R^8_0\times\Sym^2_0\R^8_1\to\Sym^2_0\R^8_2$.} For our purposes it suffices to note that the Lie bracket satisfies the commutator relations 
\begin{equation}
[\m_a,\m_b]\subset\m_c\quad\text{for distinct }a,b,c=0,1,2,\qquad[\m_a,\m_a]\subset\so(8).
\label{eq:commrel}
\end{equation}
These properties of the Lie bracket, which is constituted by $\so(8)$-equivariant homomorphisms $\m_a\otimes\m_b\to\e_7$, can be deduced directly using the decompositions of $\m_a\otimes\m_b$ into irreducible $\so(8)$-modules combined with Schur's Lemma (note that the $\m_a$ are self-dual as they are modules of an orthogonal group). For example,
\[\m_0\otimes\m_1\cong V_{2\eta_1+2\eta_3}\oplus V_{\eta_1+\eta_3+\eta_4}\oplus\m_2\]
implies that all $\so(8)$-equivariant homomorphisms $\m_0\otimes\m_1\to\e_7$ must map into $\m_2$, since the two other summands in the above decomposition do not occur as $\so(8)$-submodules of $\e_7$.

We endow $\e_7$ with the standard inner product $-B_{\e_7}$, where $B_{\e_7}$ is the Killing form of $\e_7$, and fix the inner products on $\so(8),\su(8)\subset\e_7$ as the respective restrictions of $-B_{\e_7}$. Given an irreducible representation of any of the three Lie algebras, its Casimir eigenvalue is calculated using Freudenthal's formula (\ref{eq:freudenthal}). The calculation may be implemented with \LiE. The scale factors coming from the choice of inner product on the Lie algebra have to be treated with particular caution. However we can always normalize the result using the Casimir eigenvalues of the adjoint representation, since the ratio $c^\g(V_\gamma):=\Cas^\g_\gamma/\Cas^\g_\g$ is independent of the chosen multiple of the Killing form.

The proper Casimir eigenvalues of the adjoint representations are accessible to us by means of identity (\ref{eq:castr}). Writing the trace in terms of eigenvalues, we have
\begin{align*}
\dim\h=\tr_\g\Cas^\h_\g=\sum_i\dim\g_i\cdot\Cas^\h_{\g_i},
\end{align*}
where $\g=\bigoplus_i\g_i$ is a decomposition into irreducible $\h$-modules. Note that in the cases we are interested in, $\h$ is simple, so $\Cas^\h_\h$ can be treated as a constant. This constant can now be expressed as
\[\Cas^\h_\h=\frac{\dim\h}{\sum_i\dim\g_i\cdot c^\h(\g_i)}.\]
The ratios $c^\h(\g_i)$ on the right hand side can now be computed with whatever inner product on $\h$ is convenient. We ultimately arrive at the normalizations
\[\Cas^{\e_7}_{\e_7}=1,\quad\Cas^{\su(8)}_{\su(8)}=\frac{4}{9},\quad\Cas^{\so(8)}_{\so(8)}=\frac{1}{6}.\]
Furthermore we find that the modules $\m_a$ have the same Casimir eigenvalue $\Cas^{\so(8)}_{\m_a}=\frac{2}{9}$, $a=0,1,2$, which is expected as the Casimir operator is invariant under automorphisms of the Lie algebra.

Let $\E_7:=\Aut^0(\e_7)\subset\SO(\e_7)$ be the compact adjoint form of $\e_7$. The unique simply connected compact Lie group $\widetilde \E_7$ with Lie algebra $\e_7$, which is the $2$-fold universal cover of $\E_7$, can be constructed as the preimage $\widetilde \E_7\subset\Spin(\e_7)$ under the spin covering. Inside both $\E_7$ and $\widetilde\E_7$ one finds the projective special orthogonal group
\[\PSO(8):=\SO(8)/_{\displaystyle\{\pm\Id\}}\]
as the unique connected subgroup with Lie algebra $\so(8)$. In consequence there are actually two connected homogeneous spaces
\[M:=\E_7/_{\displaystyle\PSO(8)}=\widetilde \E_7/_{\displaystyle\PSO(8)\times\Z_2},\qquad\widetilde M:=\widetilde \E_7/_{\displaystyle\PSO(8)}\]
representing the pair $\so(8)\subset\e_7$ of Lie algebras, the latter the universal cover of the former. Note that $\SO(8)$ is not contained in either $\E_7$ or $\widetilde \E_7$.

Let $g$ denote the standard metric induced by $-B_{\e_7}$ on both $M$ and $\widetilde M$. The Casimir operator $\Cas^{\so(8)}_\m$ of the isotropy representation is a multiple of the identity, namely $\frac{2}{9}$, so $g$ is an Einstein metric with Einstein constant $E=\frac{13}{36}$ by virtue of (\ref{eq:caseinstein}). Since there is a decomposition of the isotropy representation $\m=\m_0\oplus\m_1\oplus\m_2$ into three pairwise orthogonal $\PSO(8)$-modules satisfying the commutator relations (\ref{eq:commrel}), the normal homogeneous space $(M,g)$ is a so-called \emph{generalized Wallach space} (see \cite{LNF04}).

Finally we note that the normal homogenous space $\E_7/\PSO(8)$ is the total space of a totally geodesic Riemannian submersion with the symmetric space $\E_7/(\SU(8)/\Z_4)$ as base. Notably, the fiber $(\SU(8)/\SO(8))/\Z_2$ is itself locally symmetric. It is easy to check that the conditions of \cite[Thm.~9.73]{B87} for the existence of a second Einstein metric in the canonical variation of metrics are satisfied. This is again an invariant Einstein metric belonging to the $3$-dimensional family of invariant metrics on $\E_7/\PSO(8)$. In fact there are three distinct such submersions with vertical tangent spaces $\m_0$, $\m_1$ and $\m_2$, respectively, yielding three invariant Einstein metrics on $\E_7/\PSO(8)$ besides the normal one. The $G$-instability of those was shown in \cite{L2}, but also follows from results of \cite{WW21}.

 \clearpage
\section{The spectrum of the standard curvature endomorphism}
\label{sec:computation}

In this section we calculate the eigenvalues and eigenspaces of the auxiliary curvature term $\A^\ast\A$ and thus, via Corollary~\ref{qdiff}, the standard curvature endomorphism $q(R)$ on the fiber $\Sym^2\m^\ast$ of the vector bundle $\Sym^2T^\ast M$ over the base point of the homogeneous space $M=\E_7/\PSO(8)$ or its universal cover. The minimal eigenvalue of $q(R)$ will then give a lower bound for the Lichnerowicz Laplacian $\Delta_L$, concluding the proof of Theorem \ref{lest}. All subsequent calculations use the standard Riemannian metric $g$ with Einstein constant $E=\frac{13}{16}$ as defined in Section~\ref{sec:e7pso8}. For any other normal metric $\frac{1}{c}g$ on $M$, the eigenvalues have to be multiplied by $c>0$.

In order to compute the spectrum of the $\PSO(8)$-equivariant endomorphism $\A^\ast\A$, we exploit the inclusions $\so(8)\subset\su(8)\subset\e_7$. In fact there are several distinct intermediate subalgebras of type $\su(8)$, exhibiting a certain symmetry under triality.

\begin{defn}
\label{su8a}
For $a=0,1,2$, let $\su(8)_a:=\so(8)\oplus\m_a$. By (\ref{eq:commrel}), these are Lie subalgebras of $\e_7$ which are isomorphic to one another via the triality automorphism $\Theta$. Denote by $\m_a^\perp$ the orthogonal complement of $\m_a\subset\m$. We define a representation of $\su(8)_a$ on $\m=\m_a\oplus\m_a^\perp$ as follows:
\begin{enumerate}[(i)]
 \item On $\m_a$ the Lie algebra $\su(8)_a$ acts trivially.
 \item On $\m_a^\perp$ the Lie algebra $\su(8)_a$ acts through the Lie bracket of $\e_7$.\footnote{This is well-defined by (\ref{eq:commrel}) since both $\so(8)$ and $\m_a$ preserve $\m_a^\perp=\m_b\oplus\m_c$ ($a,b,c$ distinct) under the Lie bracket of $\e_7$.}
 \item Further, when $\so(8)\subset\su(8)_a$ acts on $\m$ through restriction of the action defined above, we indicate this by the subscript $\so(8)_a$.
\end{enumerate}
\end{defn}

\begin{lem}
\label{acas2}
On any tensor bundle over $\E_7/\PSO(8)$,
\[\A^\ast\A=(\A^\ast\A)_0+(\A^\ast\A)_1+(\A^\ast\A)_2\quad\text{where}\quad(\A^\ast\A)_a=\Cas^{\su(8)_a}-\Cas^{\so(8)_a}.\]
Moreover the endomorphism $(\A^\ast\A)_0$ determines the other parts by
\[(\A^\ast\A)_{a+1}=\Theta_\ast^{-1}\circ(\A^\ast\A)_a\circ\Theta_\ast,\qquad a\in\Z_3.\]
\end{lem}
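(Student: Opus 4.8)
The plan is to decompose $\A^\ast\A$ according to the three summands $\m_a$ of the isotropy representation. Recall from Lemma~\ref{acas1} (applied in the rank-$p$ tensor setting, though the structural identity already holds on $\m$ itself) that
\[\A^\ast\A=-\sum_i(\A_{e_i})_\ast^2,\]
and that the bracket $\A_XY=[X,Y]_\m$ is built from the $\so(8)$-equivariant maps $\m_a\otimes\m_b\to\e_7$. First I would choose the orthonormal basis $(e_i)$ of $\m$ so that it respects the orthogonal decomposition $\m=\m_0\oplus\m_1\oplus\m_2$, grouping the basis vectors as $(e_i)=(e_i^{(0)})\cup(e_i^{(1)})\cup(e_i^{(2)})$ with $(e_i^{(a)})$ an orthonormal basis of $\m_a$. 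This immediately gives the splitting
\[\A^\ast\A=\sum_{a=0}^2(\A^\ast\A)_a,\qquad(\A^\ast\A)_a:=-\sum_i\bigl(\A_{e_i^{(a)}}\bigr)_\ast^2,\]
so the content of the first assertion is to identify each $(\A^\ast\A)_a$ with $\Cas^{\su(8)_a}-\Cas^{\so(8)_a}$.

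For this identification I would run the argument of Lemma~\ref{acas1} again, but with the subalgebra $\su(8)_a=\so(8)\oplus\m_a\subset\e_7$ in place of $\g$, and with $\so(8)$ in place of $\h$. Concretely: for $X\in\m_a$, the adjoint action $\ad(X)$ on $\e_7$ preserves the decomposition into $\h=\so(8)$-isotypical pieces in the manner recorded by the commutator relations~(\ref{eq:commrel}), and the same block-matrix computation as in Lemma~\ref{acas1} — now with the reductive-type decomposition $\su(8)_a=\so(8)\oplus\m_a$ acting on the $\su(8)_a$-module $\m$ of Definition~\ref{su8a} — yields
\[-\sum_i\bigl(\ad(e_i^{(a)})_\ast\big|_{\m}\bigr)^2=\Cas^{\su(8)_a}_\m-\Cas^{\so(8)_a}_\m\]
once one subtracts off the $\so(8)$-part of the Casimir of $\su(8)_a$. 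The point is that on $\m_a$ itself $\su(8)_a$ acts trivially by Definition~\ref{su8a}(i) (the off-diagonal correction term $r_X\otimes r_X'$ of Lemma~\ref{acas1}, which there produced the $\Der$-term, is absent here precisely because the module is $\m$, not a tensor power built out of $\g$), while on $\m_a^\perp=\m_b\oplus\m_c$ the action is through the genuine $\e_7$-bracket; and the restriction of $\ad(X)_\ast$ for $X\in\m_a$ to $\m$ is exactly $(\A_X)_\ast$ by~(\ref{eq:commrel}), since $[\m_a,\m_b]\subset\m_c$. Summing over $a$ and over the basis gives the first displayed formula. On a general tensor bundle the same decomposition of the basis and the derivation property of $(\A_X)_\ast$ propagate the identity verbatim, since each $(\A^\ast\A)_a$ is defined fiberwise by an $\so(8)$-equivariant endomorphism of $\m$ extended as a derivation.

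The second assertion is the triality symmetry. Here I would invoke that $\Theta\in\Aut(\e_7)$ is an order-$3$ automorphism restricting to the triality automorphism of $\so(8)$ and cyclically permuting $\m_0\mapsto\m_1\mapsto\m_2\mapsto\m_0$ (as fixed in Section~\ref{sec:e7pso8}), hence carrying $\su(8)_a$ isomorphically onto $\su(8)_{a+1}$ and the $\su(8)_a$-module structure on $\m$ of Definition~\ref{su8a} onto that of $\su(8)_{a+1}$. Because $\Theta$ preserves $-B_{\e_7}$ (automorphisms preserve the Killing form), it preserves orthonormal bases and hence intertwines the Casimir operators: $\Cas^{\su(8)_{a+1}}=\Theta_\ast^{-1}\circ\Cas^{\su(8)_a}\circ\Theta_\ast$, and likewise for $\Cas^{\so(8)_{a+1}}$. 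Subtracting gives $(\A^\ast\A)_{a+1}=\Theta_\ast^{-1}\circ(\A^\ast\A)_a\circ\Theta_\ast$. The main obstacle — really the only delicate point — is the first step: one must be careful that the correction/derivation term that appeared in Lemma~\ref{acas1} genuinely drops out here, i.e.\ that the relevant module is $\m$ with the action of Definition~\ref{su8a} (on which $\m_a$ acts trivially and the cross-terms vanish) rather than a tensor power of $\e_7$; keeping the bookkeeping of which bracket components land where, via~(\ref{eq:commrel}), is what makes the block computation go through cleanly.
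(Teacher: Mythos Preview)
Your argument is essentially the paper's: split the orthonormal basis of $\m$ along $\m=\m_0\oplus\m_1\oplus\m_2$, observe that for $X\in\m_a$ the endomorphism $\A_X$ of $\m$ coincides with the $\su(8)_a$-action of Definition~\ref{su8a} (zero on $\m_a$ since $[\m_a,\m_a]\subset\so(8)$, and the genuine bracket on $\m_a^\perp$), and complete $(e_i^{(a)})$ to an orthonormal basis of $\su(8)_a=\so(8)\oplus\m_a$ to get $\Cas^{\su(8)_a}=(\A^\ast\A)_a+\Cas^{\so(8)_a}$. The detour through ``running Lemma~\ref{acas1} again'' is unnecessary and slightly misleading: there is no projection and no $\Der$-term here precisely because $\m$ with the action of Definition~\ref{su8a} is already an honest $\su(8)_a$-module, so the Casimir splits directly without any block-matrix bookkeeping---the paper does this in one line. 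One genuine slip: with your stated direction $\Theta:\m_a\to\m_{a+1}$, conjugation gives $\Cas^{\su(8)_{a+1}}=\Theta_\ast\circ\Cas^{\su(8)_a}\circ\Theta_\ast^{-1}$, the opposite of what you wrote; the paper's convention is $\Theta:\m_a\to\m_{a-1}$, and it argues the triality relation directly from $\Theta(\A_XY)=\A_{\Theta X}(\Theta Y)$ rather than via the Casimirs.
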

\begin{proof}
Recall that $\A^\ast\A$ is defined as a sum over an orthonormal basis of the isotropy representation $\m$, which has the invariant orthogonal decomposition $\m=\m_0\oplus\m_1\oplus\m_2$. In turn we can write $\A^\ast\A$ as a sum
\[\A^\ast\A=(\A^\ast\A)_0+(\A^\ast\A)_1+(\A^\ast\A)_2\]
of $\PSO(8)$-equivariant self-adjoint endomorphisms $(\A^\ast\A)_a$ defined by summing over an orthonormal basis $(e_i^{(a)})$ of $\m_a$, i.e.
\[(\A^\ast\A)_a:=-\sum_i\big(\A_{e_i^{(a)}}\big)_\ast^2.\]
The extended triality automorphism $\Theta\in\Aut(\e_7)$ maps the subspace $\m\subset\e_7$ isometrically to itself and permutes $\m_0$, $\m_1$ and $\m_2$. In consequence $\Theta$ preserves $\A$, that is,
\[\Theta(\A_XY)=\Theta([X,Y]_\m)=[\Theta X,\Theta Y]_\m=\A_{\Theta X}(\Theta Y),\]
and maps any orthonormal basis of $\m_a$ to an orthonormal basis of $\m_{a-1}$. It is then easy to see that
\[(\A^\ast\A)_{a+1}=\Theta^{-1}\circ(\A^\ast\A)_a\circ\Theta,\qquad a\in\Z_3,\]
holds on $\m$. Provided we replace $\Theta$ with its induced action $\Theta_\ast$ on tensors, these relations continue to hold on tensor powers of $\m$, 

Let now $X\in\m_a$ and $Y\in\m_b$ for some $a,b=0,1,2$. By the commutator relations (\ref{eq:commrel}),
\[\A_XY=[X,Y]_\m=\begin{cases}
                            0&a=b,\\
                            [X,Y]&a\neq b.
                           \end{cases}
\]
This means $X$ acts on $\m$ through the $\su(8)_a$-action defined in \ref{su8a}. Completing $(e_i^{(a)})$ to an orthonormal basis of $\su(8)_a$, we immediately obtain
\[\Cas^{\su(8)_a}=(\A^\ast\A)_a+\Cas^{\so(8)_a}.\]
\end{proof}

\setcounter{paragraph}{2}
\paragraph{Isotypical decomposition of $\Sym^2\m$.}
Since $\A^\ast\A$ is a symmetric $\PSO(8)$-equi\-va\-ri\-ant endomorphism of $\Sym^2\m^\ast$, all its ei\-gen\-spa\-ces are necessarily $\PSO(8)$-invariant subspaces. Hence we will begin by decomposing the fiber $\Sym^2\m^\ast$ into isotypical subspaces. We note that $\m^\ast\cong\m$ is self-dual via the invariant inner product, thus also $\Sym^2\m\cong\Sym^2\m^\ast$. The second symmetric power of $\m=\m_0\oplus\m_1\oplus\m_2$ initially decomposes as
\[\Sym^2\m=\Sym^2\m_0\oplus\Sym^2\m_1\oplus\Sym^2\m_2\oplus(\m_0\otimes\m_1)\oplus(\m_0\otimes\m_2)\oplus(\m_1\otimes\m_2).\]
Recall the description (\ref{eq:isotropyweights}) of the $\PSO(8)$-modules $\m_a$ in terms of highest weights. With help of \LiE, the above decomposition can be refined as follows:
\begin{equation}
\begin{aligned}
\Sym^2\m_0&=\Sym^2V_{2\eta_1}=\R\oplus V_{4\eta_1}\oplus V_{2\eta_1}\oplus V_{2\eta_2},\\
\Sym^2\m_1&=\Sym^2V_{2\eta_3}=\R\oplus V_{4\eta_3}\oplus V_{2\eta_3}\oplus V_{2\eta_2},\\
\Sym^2\m_2&=\Sym^2V_{2\eta_4}=\R\oplus V_{4\eta_4}\oplus V_{2\eta_4}\oplus V_{2\eta_2},\\
\m_0\otimes\m_1&=V_{2\eta_1}\otimes V_{2\eta_3}=V_{2\eta_1+2\eta_3}\oplus V_{\eta_1+\eta_3+\eta_4}\oplus V_{2\eta_4},\\
\m_0\otimes\m_2&=V_{2\eta_1}\otimes V_{2\eta_4}=V_{2\eta_1+2\eta_4}\oplus V_{\eta_1+\eta_3+\eta_4}\oplus V_{2\eta_3},\\
\m_1\otimes\m_2&=V_{2\eta_3}\otimes V_{2\eta_4}=V_{2\eta_3+2\eta_4}\oplus V_{\eta_1+\eta_3+\eta_4}\oplus V_{2\eta_1}.
\end{aligned}
\label{eq:sym2decomp}
\end{equation}
Note that the last three lines imply the relations $[\m_a,\m_b]\subset\m_c$ in (\ref{eq:commrel}) for the $\e_7$ Lie bracket, as hinted at in Section~\ref{sec:e7pso8}. Note also the symmetry under triality, i.e.~under permutation of the weights $\eta_1$, $\eta_3$ and $\eta_4$.

These highest weight modules can be further interpreted as
\begin{align*}
V_{4\eta_1}&=\m_0\cartan\m_0=\Sym^4_0\R^8_0,&V_{2\eta_2}&=\Lambda^2\R^8_0\cartan\Lambda^2\R^8_0,\\
V_{\eta_1+\eta_3+\eta_4}&=\R^8_0\cartan\Lambda^3\R^8_0,&V_{2\eta_1+2\eta_3}&=\m_0\cartan\m_1
\end{align*}
and similarly for permutations of $\eta_1,\eta_3,\eta_4$ (resp. $\R^8_0,\R^8_1,\R^8_2$). Here, $\cartan$ denotes the Cartan product of irreducible representations,
\[V_\gamma\cartan V_{\gamma'}:=V_{\gamma+\gamma'}\subset V_{\gamma}\otimes V_{\gamma'}.\]
Moreover $V_{2\eta_2}$ can be identified with the space of algebraic Weyl tensors over any of the $8$-dimensional representations of $\so(8)$.

\paragraph{Actions of $\e_7$ and $\su(8)$.}
In order to compute the spectrum of $\A^\ast\A$ on $\Sym^2\m$ by means of Lemmas~\ref{acas1} and \ref{acas2}, one needs to evaluate Casimir operators of $\e_7$, $\su(8)_a$ and $\so(8)_a$. It is therefore essential to identify how these Lie algebras act on each isotypical summand of $\Sym^2\m$, or, to be more precise, how each summand embeds into a module of each $\e_7$, $\su(8)_a$ and $\so(8)_a$. We thus turn to decompositions of suitable modules that are invariant under $\e_7$ or $\su(8)$, respectively. All subsequent decompositions and branchings to subalgebras are computed with help of \LiE.

First, consider the embedding $\Sym^2\m\subset\Sym^2\e_7$. The right hand side decomposes into irreducible $\e_7$-modules as
\[\Sym^2\e_7=\R\oplus V_{\omega_6}\oplus V_{2\omega_1}.\]
Branching to $\so(8)$ gives
\begin{equation}
\begin{aligned}
V_{\omega_6}\cong&\,V_{2\eta_1}\oplus V_{2\eta_3}\oplus V_{2\eta_4}\oplus 3V_{\eta_1+\eta_3+\eta_4}\oplus V_{2\eta_2}\oplus 3V_{\eta_2},\\
V_{2\omega_1}\cong&\,3\R\oplus 3V_{2\eta_1}\oplus 3V_{2\eta_3}\oplus 3V_{2\eta_4}\oplus V_{4\eta_1}\oplus V_{4\eta_3}\oplus V_{4\eta_4}\oplus 3V_{\eta_1+\eta_3+\eta_4}\oplus3V_{2\eta_2}\\
&\oplus V_{2\eta_1+2\eta_3}\oplus V_{2\eta_1+2\eta_4}\oplus V_{2\eta_3+2\eta_4}\oplus V_{\eta_2+2\eta_1}\oplus V_{\eta_2+2\eta_3}\oplus V_{\eta_2+2\eta_4}.
\end{aligned}
\label{eq:sym2e7decomp}
\end{equation}
By comparison with (\ref{eq:sym2decomp}), we find that the summands $\m_a\cartan\m_b$ necessarily embed into $V_{2\omega_1}$ for $a,b=0,1,2$. Moreover, by considering the tracefree part $\Sym^2_0\e_7\cong V_{\omega_6}\oplus V_{2\omega_1}$, we see that the $2$-dimensional trivial submodule $(\Sym^2_0\m)^{\so(8)}\cong 2\R$ of $\Sym^2_0\m$ also lies inside $V_{2\omega_1}$. Thus on these summands the $\Cas^{\e_7}_{\e_7\otimes\e_7}$-term from Lemma~\ref{acas1} is simply multiplication by the constant $\Cas^{\e_7}_{2\omega_1}$.

Second, recall that as an $\su(8)_a$-representation
\[\m=\m_a\oplus\m_a^\perp,\qquad a=0,1,2,\]
where $\m_a$ is trivial and $\m_a^\perp\cong W$ with $W^\C\cong\Lambda^4\C^8=V_{\zeta_4}$. Thus
\begin{align*}
\Sym^2\m&=\underbrace{\Sym^2\m_a}_{\text{trivial}}\oplus(\underbrace{\m_a\otimes\m_a^\perp}_{\cong35V_{\zeta_4}})\oplus\Sym^2\m_a^\perp,\\
\Sym^2\m_a^\perp&=\R\oplus V_{2\zeta_4}\oplus V_{\zeta_2+\zeta_6}.
\end{align*}
Since $\so(8)_0$ is embedded into $\su(8)_0$ in the standard way, the branchings of the $\su(8)_0$-representations $V_{2\zeta_4},V_{\zeta_2+\zeta_6}$ to $\so(8)_0$ can easily be computed:
\begin{equation}
\begin{aligned}
V_{2\zeta_4}&\cong V_{4\eta_3}\oplus V_{4\eta_4}\oplus V_{2\eta_3+2\eta_4}\oplus V_{2\eta_2}\oplus V_{2\eta_1}\oplus\R,\\
V_{\zeta_2+\zeta_6}&\cong V_{2\eta_3}\oplus V_{2\eta_4}\oplus V_{2\eta_2}\oplus V_{\eta_1+\eta_3+\eta_4}.
\end{aligned}
\label{eq:sym2su8decomp}
\end{equation}
The branchings of $\su(8)_{1,2}$-representations to $\so(8)_{1,2}$ work similarly, but with $\eta_1,\eta_3,\eta_4$ permuted by triality. Comparing with the isotypical decomposition of $\Sym^2\m$, we can again identify the actions of $\su(8)_a$ as well as $\so(8)_a$ on some summands of (\ref{eq:sym2decomp}). The results are collected in Table \ref{decomp}. Whenever a summand of $\Sym^2\m$ embeds into a unique isotypical module $V_\gamma$ of $\e_7$ or $\su(8)_a$, the corresponding Casimir operator acts as multiplication by the constant $\Cas_\gamma$. In each of those cases this constant is computed using \LiE\ and listed in Table \ref{allcas}.

\paragraph{Eigenvalues of $\A^\ast\A$ on remaining components.}
On any isotypical summand of $\Sym^2\m$ where the preceding has shown that the Casimir operators of either $\e_7$ or $\su(8)_a$ are multiples of the identity, we find the eigenvalue of $\A^\ast\A$ by one of the formulas from Lemmas~\ref{acas1} and \ref{acas2} (see Table \ref{alleig}). This works for most summands of $\Sym^2\m$, except for
\begin{enumerate}[(i)]
 \item the three copies of the representation $V_{2\eta_2}$ of Weyl tensors on $\R^8$,
 \item the trace part in $\Sym^2\m$, i.e.~the trivial summand spanned by $B_{\e_7}\big|_{\m}$.
\end{enumerate}
Issue (ii) is swiftly remedied by noting that
\[\A_Xg(Y,Z)=-g(\A_XY,Z)-g(Y,\A_XZ)=0\]
since the $(2,1)$-tensor $\A$ is totally skew-symmetric, and thus $\A^\ast\A g=0$. However (i) requires a more careful analysis.

Denote by $\Weyl_a$ the copy of $V_{2\eta_2}$ occurring inside $\Sym^2\m_a$, cf.~(\ref{eq:sym2decomp}), and
\[\Weyl:=\Weyl_0\oplus\Weyl_1\oplus\Weyl_2\subset\Sym^2\m,\qquad\Weyl\cong 3V_{2\eta_2}.\]
Consider the operator $(\A^\ast\A)_0$ on $\Weyl$. By the construction of $(\A^\ast\A)_0$ and commutator relations (\ref{eq:commrel}), $(\A^\ast\A)_0$ must annihilate $\Weyl_0\subset\Sym^2\m_0$ and preserve $\Weyl_1\oplus\Weyl_2$. Combined with the symmetries under triality, it follows that $(\A^\ast\A)_0$ takes the block form
\begin{equation}
(\A^\ast\A)_0\big|_{\Weyl}=\begin{pmatrix}
                     0&0&0\\
                     0&s&t\\
                     0&t&s
                    \end{pmatrix},\qquad s,t\in\R,
\label{eq:a0block}
\end{equation}
with respect to the above decomposition of $\Weyl$. This matrix has eigenvalues $0$ and $s\pm t$. Since $(\A^\ast\A)_0$ determines $(\A^\ast\A)_1$ and $(\A^\ast\A)_2$ by triality, these have a similar block form. Summing up, we find that the matrix of $\A^\ast\A$ reads
\[\A^\ast\A\big|_{\Weyl}=(\A^\ast\A)_0\big|_{\Weyl}+(\A^\ast\A)_1\big|_{\Weyl}+(\A^\ast\A)_2\big|_{\Weyl}=\begin{pmatrix}
                           2s&t&t\\
                           t&2s&t\\
                           t&t&2s
                          \end{pmatrix}.
\]
We look for clues to determine $s$ and $t$. First, recall that $\su(8)_0$ acts on $\Sym^2\m_0^\perp$ through the Lie bracket, thus $\Weyl_1\oplus\Weyl_2\cong 2V_{2\eta_2}$ as an $\so(8)_0$-submodule of $\Sym^2\m_0^\perp$. But the module $V_{2\eta_2}$ occurs with multiplicity $1$ in each of $V_{2\zeta_4},V_{\zeta_2+\zeta_6}\subset\Sym^2\m_0^\perp$, cf. (\ref{eq:sym2su8decomp}). It follows from Lemma~\ref{acas2} that $(\A^\ast\A)_0\big|_{\Weyl_1\oplus\Weyl_2}$ has eigenvalues
\begin{align*}
(\A^\ast\A)_0\big|_{(\Weyl_1\oplus\Weyl_2)\cap V_{2\zeta_4}}&=\Cas^{\su(8)}_{2\zeta_4}-\Cas^{\so(8)}_{2\eta_2}=\frac{4}{9}\cdot\frac{5}{2}-\frac{1}{6}\cdot\frac{7}{3}=\frac{13}{18},\\
(\A^\ast\A)_0\big|_{(\Weyl_1\oplus\Weyl_2)\cap V_{\zeta_2+\zeta_6}}&=\Cas^{\su(8)}_{\zeta_2+\zeta_6}-\Cas^{\so(8)}_{2\eta_2}=\frac{4}{9}\cdot\frac{7}{4}-\frac{1}{6}\cdot\frac{7}{3}=\frac{7}{18}.
\end{align*}
In light of (\ref{eq:a0block}), this implies that $s=\frac{10}{18}$ and $t=\pm\frac{3}{18}$. In turn $\A^\ast\A$ is of block form
\[\A^\ast\A\big|_{\Weyl}=\frac{1}{18}\begin{pmatrix}
                           20&\pm3&\pm3\\
                           \pm3&20&\pm3\\
                           \pm3&\pm3&20
                          \end{pmatrix},
\]
which has eigenvalues $\frac{13}{9},\frac{17}{18},\frac{17}{18}$ or $\frac{23}{18},\frac{23}{18},\frac{7}{9}$.

Second, looking at the decompositions (\ref{eq:sym2e7decomp}), we find that $V_{2\eta_2}$ has multiplicity $4$ in $\Sym^2\e_7$. Denote the $V_{2\eta_2}$-isotypical component of $\Sym^2\e_7$, viewed as an $\so(8)$-module, by $\Weyl'\cong 4V_{2\eta_2}$. Then $\Weyl'\cap V_{2\omega_1}\cong 3V_{2\eta_2}$. Since
\[\Weyl\cap V_{2\omega_1}=\Weyl\cap(\Weyl'\cap V_{2\omega_1})\subset\Weyl'\]
and the intersection of any two $3$-dimensional subspaces in $\R^4$ is at least $2$-dimensional, it follows with Schur's Lemma that $\Weyl\cap V_{2\omega_1}\cong cV_{2\eta_2}$ with $c\geq2$. On this subspace, $\Cas^{\e_7}$ is just multiplication by the constant $\Cas^{\e_7}_{2\omega_1}=\frac{19}{9}$. Thus the eigenvalue of $\A^\ast\A$ is readily computed as
\[\A^\ast\A\big|_{\Weyl\cap V_{2\omega_1}}=\Cas^{\e_7}_{2\omega_1}-\Cas^{\so(8)}_{2\eta_2}-2\Cas^{\so(8)}_{\m}=\frac{19}{9}-\frac{1}{6}\cdot\frac{7}{3}-2\cdot\frac{2}{9}=\frac{23}{18}.\]
Combined with the considerations above, we conclude that $t=-\frac{3}{18}$ and the spectrum of $\A^\ast\A\big|_{\Weyl}$ is given by
\[\frac{7}{9}\text{ on }\diag(V_{2\eta_2})\subset\Weyl,\quad\frac{23}{18}\text{ on }\diag(V_{2\eta_2})^\perp\cong 2V_{2\eta_2}\subset\Weyl.\]

\paragraph{Proof of Theorem~\ref{lest}.}
Now that the spectrum of $\A^\ast\A$ is assembled, we turn to the operator $q(\Riem)$ on $\Sym^2\m$. Recall from (\ref{eq:qrcas}) that $q(\Rcr)=\Cas^{\so(8)}_{\Sym^2\m}$, which is a constant on each isotypical component of $\Sym^2\m$. By virtue of Corollary~\ref{qdiff}, we now obtain $q(\Riem)$ from
\[q(\Riem)=\frac{1}{4}\A^\ast\A+\Cas^{\so(8)}_{\Sym^2\m}.\]
The respective eigenvalues are listed in Table \ref{alleig}. Notice that $q(R)\geq\frac{5}{12}$ on $\Sym^2_0\m$ (excluding the trace part spanned by $B_{\e_7}\big|_\m$), and recall that $E=\frac{13}{36}$. Together with inequality (\ref{eq:weitzenboeck}) this implies that
\[\Delta_L\geq2q(\Riem)\geq\frac{5}{6}=\frac{30}{13}E>2E\]
holds true on $\TT(M)$. Thus the strict stability of the standard metric on $\E_7/\PSO(8)$ is shown.\qed

\setcounter{satz}{6}
\begin{bem}
This bound on $\Delta_L$ is sharp and realized by $\E_7$-invariant tensors arising from the canonical variation in the three Riemannian submersions
\[\left(\SU(8)/_{\displaystyle\SO(8)}\right)\!\!/_{\displaystyle\Z_2}\longrightarrow\E_7/_{\displaystyle\PSO(8)}\longrightarrow\E_7/_{\displaystyle\SU(8)/\Z_4}\]
with totally geodesic fibres, or, equivalently, from scaling the standard metric on one of the summands in the decomposition $\m=\m_0\oplus\m_1\oplus\m_2$. Indeed, by results of \cite{HMS16}, these are Killing tensors and thus satisfy $\Delta_Lh=2q(\Riem)h$. The Lichnerowicz eigenvalue of these invariant tensors had originally been found by J. Lauret and C. Will \cite[Table 2]{L2}, who also showed that these tensors constitute (up to tracelessness) destabilizing directions for any of the three non-normal Einstein metrics on $\E_7/\PSO(8)$.
\end{bem}

\begin{landscape}
\begin{table}[h]
\centering
\renewcommand{\arraystretch}{1.2}
\begin{tabular}{c|c|c|c|c|c|c|c}
&$\su(8)_0$&$\so(8)_0$&$\su(8)_1$&$\so(8)_1$&$\su(8)_2$&$\so(8)_2$&$\e_7$\\\hline
$\m_0\cartan\m_0$&trivial&trivial&$V_{2\zeta_4}$&$V_{4\eta_1}$&$V_{2\zeta_4}$&$V_{4\eta_1}$&$V_{2\omega_1}$\\
$\m_1\cartan\m_1$&$V_{2\zeta_4}$&$V_{4\eta_3}$&trivial&trivial&$V_{2\zeta_4}$&$V_{4\eta_3}$&$V_{2\omega_1}$\\
$\m_2\cartan\m_2$&$V_{2\zeta_4}$&$V_{4\eta_4}$&$V_{2\zeta_4}$&$V_{4\eta_4}$&trivial&trivial&$V_{2\omega_1}$\\
$V_{2\eta_2}\subset\Sym^2\m_0$&trivial&trivial&$V_{2\zeta_4}\oplus V_{\zeta_2+\zeta_6}$&$V_{2\eta_2}$&$V_{2\zeta_4}\oplus V_{\zeta_2+\zeta_6}$&$V_{2\eta_2}$&$V_{2\omega_1}\oplus V_{\omega_6}$\\
$V_{2\eta_2}\subset\Sym^2\m_1$&$V_{2\zeta_4}\oplus V_{\zeta_2+\zeta_6}$&$V_{2\eta_2}$&trivial&trivial&$V_{2\zeta_4}\oplus V_{\zeta_2+\zeta_6}$&$V_{2\eta_2}$&$V_{2\omega_1}\oplus V_{\omega_6}$\\
$V_{2\eta_2}\subset\Sym^2\m_2$&$V_{2\zeta_4}\oplus V_{\zeta_2+\zeta_6}$&$V_{2\eta_2}$&$V_{2\zeta_4}\oplus V_{\zeta_2+\zeta_6}$&$V_{2\eta_2}$&trivial&trivial&$V_{2\omega_1}\oplus V_{\omega_6}$\\
$\m_0\cartan\m_1$&$V_{\zeta_4}$&$V_{2\eta_3}$&$V_{\zeta_4}$&$V_{2\eta_1}$&$V_{2\zeta_4}$&$V_{2\eta_1+2\eta_3}$&$V_{2\omega_1}$\\
$\m_0\cartan\m_2$&$V_{\zeta_4}$&$V_{2\eta_4}$&$V_{2\zeta_4}$&$V_{2\eta_1+2\eta_4}$&$V_{\zeta_4}$&$V_{2\eta_1}$&$V_{2\omega_1}$\\
$\m_1\cartan\m_2$&$V_{2\zeta_4}$&$V_{2\eta_3+2\eta_4}$&$V_{\zeta_4}$&$V_{2\eta_4}$&$V_{\zeta_4}$&$V_{2\eta_3}$&$V_{2\omega_1}$\\
$\m_0\subset\Sym^2\m_0$&trivial&trivial&$V_{\zeta_2+\zeta_6}$&$V_{2\eta_1}$&$V_{\zeta_2+\zeta_6}$&$V_{2\eta_1}$&$V_{2\omega_1}\oplus V_{\omega_6}$\\
$\m_1\subset\Sym^2\m_1$&$V_{\zeta_2+\zeta_6}$&$V_{2\eta_3}$&trivial&trivial&$V_{\zeta_2+\zeta_6}$&$V_{2\eta_3}$&$V_{2\omega_1}\oplus V_{\omega_6}$\\
$\m_2\subset\Sym^2\m_2$&$V_{\zeta_2+\zeta_6}$&$V_{2\eta_4}$&$V_{\zeta_2+\zeta_6}$&$V_{2\eta_4}$&trivial&trivial&$V_{2\omega_1}\oplus V_{\omega_6}$\\
$\m_0\subset\m_1\otimes\m_2$&$V_{2\zeta_4}$&$V_{2\eta_1}$&$V_{\zeta_4}$&$V_{2\eta_4}$&$V_{\zeta_4}$&$V_{2\eta_3}$&$V_{2\omega_1}\oplus V_{\omega_6}$\\
$\m_1\subset\m_0\otimes\m_2$&$V_{\zeta_4}$&$V_{2\eta_4}$&$V_{2\zeta_4}$&$V_{2\eta_3}$&$V_{\zeta_4}$&$V_{2\eta_1}$&$V_{2\omega_1}\oplus V_{\omega_6}$\\
$\m_2\subset\m_0\otimes\m_1$&$V_{\zeta_4}$&$V_{2\eta_3}$&$V_{\zeta_4}$&$V_{2\eta_1}$&$V_{2\zeta_4}$&$V_{2\eta_4}$&$V_{2\omega_1}\oplus V_{\omega_6}$\\
$V_{\eta_1+\eta_3+\eta_4}\subset\Sym^2\m_0$&$V_{\zeta_2+\zeta_6}$&$V_{\eta_1+\eta_3+\eta_4}$&$V_{\zeta_4}$&$V_{2\eta_4}$&$V_{\zeta_4}$&$V_{2\eta_3}$&$V_{2\omega_1}\oplus V_{\omega_6}$\\
$V_{\eta_1+\eta_3+\eta_4}\subset\Sym^2\m_1$&$V_{\zeta_4}$&$V_{2\eta_4}$&$V_{\zeta_2+\zeta_6}$&$V_{\eta_1+\eta_3+\eta_4}$&$V_{\zeta_4}$&$V_{2\eta_1}$&$V_{2\omega_1}\oplus V_{\omega_6}$\\
$V_{\eta_1+\eta_3+\eta_4}\subset\Sym^2\m_2$&$V_{\zeta_4}$&$V_{2\eta_3}$&$V_{\zeta_4}$&$V_{2\eta_1}$&$V_{\zeta_2+\zeta_6}$&$V_{\eta_1+\eta_3+\eta_4}$&$V_{2\omega_1}\oplus V_{\omega_6}$\\
$(\Sym^2_0\m)^{\so(8)}$&$\R\oplus V_{2\zeta_4}$&trivial&$\R\oplus V_{2\zeta_4}$&trivial&$\R\oplus V_{2\zeta_4}$&trivial&$V_{2\omega_1}$\\
$\R B_{\e_7}\big|_\m$&$\R\oplus V_{2\zeta_4}$&trivial&$\R\oplus V_{2\zeta_4}$&trivial&$\R\oplus V_{2\zeta_4}$&trivial&$\R\oplus V_{2\omega_1}$
\end{tabular}
\caption{All $\so(8)$-irreducible summands of $\Sym^2\m$ and the highest weight modules they embed into.}
\label{decomp}
\end{table}
\end{landscape}

\begin{table}[ht]
\centering
\renewcommand{\arraystretch}{1.2}
\begin{tabular}{c|c|c|c|c|c|c}
&$\Cas^{\su(8)_a}$&$\Cas^{\so(8)_a}$&$\Cas^{\su(8)_b}$&$\Cas^{\so(8)_b}$&$\Cas^{\e_7}$&$\Cas^{\so(8)}$\\\hline
$\m_a\cartan\m_a$&$0$&$0$&$\frac{10}{9}$&$\frac{5}{9}$&$\frac{19}{9}$&$\frac{5}{9}$\\
$V_{2\eta_2}\subset\Sym^2\m_a$&$0$&$0$&--&$\frac{7}{18}$&--&$\frac{7}{18}$\\
$\m_a\cartan\m_c$&$\frac{1}{2}$&$\frac{2}{9}$&$\frac{10}{9}$&$\frac{1}{2}$&$\frac{19}{9}$&$\frac{1}{2}$\\
$\m_a\subset\Sym^2\m_a$&$0$&$0$&$\frac{7}{9}$&$\frac{2}{9}$&--&$\frac{2}{9}$\\
$\m_a\subset\m_b\otimes\m_c$&$\frac{10}{9}$&$\frac{2}{9}$&$\frac{1}{2}$&$\frac{2}{9}$&--&$\frac{2}{9}$\\
$V_{\eta_1+\eta_3+\eta_4}\subset\Sym^2\m_a$&$\frac{7}{9}$&$\frac{1}{3}$&$\frac{1}{2}$&$\frac{2}{9}$&--&$\frac{1}{3}$\\
$(\Sym^2_0\m)^{\so(8)}$&--&$0$&--&$0$&$\frac{19}{9}$&$0$\\
$\R B_{\e_7}\big|_\m$&--&$0$&--&$0$&--&$0$\\
\end{tabular}
\caption{Casimir eigenvalues on the summands in Table~\ref{decomp}. Here $a,b,c$ are distinct. A dash indicates that the summand might not be contained in a single eigenspace of the Casimir operator.}
\label{allcas}
\end{table}

\begin{table}[ht]
\centering
\renewcommand{\arraystretch}{1.2}
\begin{tabular}{c|c|c|c|c|c}
&$(\A^\ast\A)_a$&$(\A^\ast\A)_b$&$\A^\ast\A$&$q(\Rcr)$&$q(\Riem)$\\\hline
$\m_a\cartan\m_a$&$0$&$\frac{5}{9}$&$\frac{10}{9}$&$\frac{5}{9}$&$\frac{5}{6}$\\
$\diag(V_{2\eta_2})\subset\Weyl$&--&--&$\frac{7}{9}$&$\frac{7}{18}$&$\frac{7}{12}$\\
$\diag(V_{2\eta_2})^\perp\subset\Weyl$&--&--&$\frac{23}{18}$&$\frac{7}{18}$&$\frac{17}{24}$\\
$\m_a\cartan\m_c$&$\frac{5}{18}$&$\frac{11}{18}$&$\frac{7}{6}$&$\frac{1}{2}$&$\frac{19}{24}$\\
$\m_a\subset\Sym^2\m_a$&$0$&$\frac{5}{9}$&$\frac{10}{9}$&$\frac{2}{9}$&$\frac{1}{2}$\\
$\m_a\subset\m_b\otimes\m_c$&$\frac{8}{9}$&$\frac{5}{18}$&$\frac{13}{9}$&$\frac{2}{9}$&$\frac{7}{12}$\\
$V_{\eta_1+\eta_3+\eta_4}\subset\Sym^2\m_a$&$\frac{4}{9}$&$\frac{5}{18}$&$1$&$\frac{1}{3}$&$\frac{7}{12}$\\
$(\Sym^2_0\m)^{\so(8)}$&--&--&$\frac{5}{3}$&$0$&$\frac{5}{12}$\\
$\R B_{\e_7}\big|_\m$&--&--&$0$&$0$&$0$
\end{tabular}
\caption{The eigenvalues of $\A^\ast\A$, $q(\Rcr)$ and $q(\Riem)$ on the summands of $\Sym^2\m$. Here $a,b,c$ are distinct.}
\label{alleig}
\end{table}

\clearpage

\end{document}